\newtheorem{tm}{Theorem}
\newtheorem{defin}{Definition}
\newtheorem{rk}{Remark}
\newtheorem{prop}{Proposition}
\newtheorem{lem}{Lemma}
\numberwithin{equation}{section}
\newcommand{\rr}{{\mathbb R}}
\def\N{{\mathbb N}}
\def\E{{\mathbb E}}
\def\P{{\mathbb P}}
\begin{document}

\title{Coordinate changed random fields on manifolds}

\author{Mirko D'Ovidio}
\address{Department of Basic and Applied Sciences for Engineering, Sapienza University of Rome,  A. Scarpa 00161 Rome, Italy}
\email{mirko.dovidio@uniroma1.it}

\author{Erkan Nane}
\address{Department of Mathematics and Statistics, Auburn University, Auburn, AL 36849, USA}
\email{nane@auburn.edu}

\keywords{Random field, Manifold, Laplace-Beltrami operator, Fractional operator, Karhunen-Loeve expansion, spectral representation}

\date{\today}

\subjclass[2000]{}

\begin{abstract}
We introduce a class of  time dependent random fields on compact Riemannian monifolds. These are represented by  time-changed  Brownian motions.  These processes are  time-changed  diffusion, or the stochastic solution to the equation involving the  Laplace-Beltrami operator and a time-fractional derivative of order $\beta\in (0,1)$.   The time dependent random fields we present in this work can  therefore be  realized through composition and can be viewed as  random fields on randomly varying manifolds.
\end{abstract}

\keywords{ random field on compact manifold, time-changed rotational Brownian motion, stable subordinator, fractional diffusion, sphere, torus.}

\maketitle

\section{Introduction}

In recent years, the study of the random fields on manifolds  attracted the attention of many researchers. They have focused on the construction and characterization of random fields indexed by compact manifolds  such as the sphere $\mathbb{S}^2_r = \{\mathbf{x} \in \mathbb{R}^3:\, |\mathbf{x}|=r\}$, torus, and other compact manifolds:  see, for example, \cite{balMar07, marin06, marin08, marpec08}. In such papers  compact manifolds represent a domain in which the random field is observed. The interest in studying random fields on the sphere is especially represented by the analysis of the Cosmic Microwave Background (CMB) radiation which is currently at the core of physical and cosmological research: see, for example, \cite{Dodelson, kolturner}. CMB radiation is thermal radiation filling the observable universe almost uniformly \cite{PenWil65} and is well explained as radiation associated with an early stage in the development of the universe. For more details on CMB radiation see our recent paper \cite{DovNane1}.

Beside the interest on random fields, the study of fractional diffusion have attracted the attention of many researchers recently.  The fractional diffusions are related to anomalous diffusions or diffusions in non-homogeneous media with random fractal structures; see, for example, \cite{meerschaert-nane-xiao}. Initial study was carried out by  \cite{Koc89, Nig86, Wyss86} in which the authors  established  the mathematical foundations of fractional diffusions: see,  for example,  \cite{NANERW} for a short survey on these results. A large  class of fractional diffusions  are solved by  stochastic  processes that are  time-changed  by inverse stable subordinators: see, for example,  \cite{mnv-09, OB09}.

 Let $(\mathcal{M}, \mu)$ be a smooth connected  Riemannian manifold  of dimension $n\geq 1$ with  Riemannian metric $g$. The associated Laplace-Beltrami operator $\triangle=\triangle_{_\mathcal{M}}$ in $\mathcal{M}$ is  an elliptic, second order, differential operator  defined in the space $C^\infty_0(\mathcal{M})$. In local coordinates, this operator is written as
\begin{equation}
\triangle = \frac{1}{\sqrt{g}}\sum_{i,j=1}^n \frac{\partial}{\partial x_i} \left(  g^{ij} \sqrt{g} \frac{\partial}{\partial x_j}  \right)
\end{equation}
where $\{g_{ij}\}$ is the matrix of the Riemannian metric, $\{g^{ij}\}$ and $g$ are respectively the inverse  and the determinant of $\{g_{ij}\}$.

 For any $y \in \mathcal{M}$, the heat kernel $p(x, y, t)$ is the  fundamental solution to the heat equation
  \begin{equation}
\partial_t u =\triangle u \label{heatM}
\end{equation}
  with initial point source  at $y$. Furthermore,  $p(x,y,t)$ defines an integral kernel of the heat semigroup $P_t = e^{ -t \triangle_\mathcal{M}}$ and  $p(x,y,t)$ is the transition density of a diffusion process on $\mathcal{M}$ which is a Brownian motion generated by $\triangle_\mathcal{M}$. If $\mathcal{M}$ is compact, then $P_t$ is a compact operator on $L^2(\mathcal{M})$.  By the general theory of compact operators, the transition density (heat kernel) $p(x,y,t)$ can be represented as a series expansion in terms of the eigenfunctions of $-\triangle_\mathcal{M}$. The reader is referred to the works by \cite{Chavel84,Davies89,Donnelly-06,Grig89}.

In this paper we consider random fields on the compact Riemannian manifold $\mathcal{M}$, especially 2-manifolds such as torus or the double torus, M\"{o}bious strip, Cylinder and sphere. We will construct a new class of time-dependent random fields indexed by sets of coordinates randomly varying with time  in $\mathcal{M}$. Our construction involves time-changed  Brownian motion on the manifold $\mathcal{M}$ for which we study the corresponding Cauchy problem with  random and deterministic initial conditions.

Let $S^\beta_t$ be a stable subordinator of index $\beta\in (0,1)$ with Laplace transform
 \begin{equation}
 \mathbb{E} \exp ( -s  S^{\beta}_t)  = \exp (- t \, s^\beta). \label{lapH}
\end{equation}  We define by
 \begin{equation}\label{inverse-stable}
 E^{\beta}_t=\inf\{\tau>0:S^{\beta}_\tau>t \}
 \end{equation}
    the inverse of the stable subordinator $S^\beta_t$ of order $\beta \in (0,1)$.
   $E^\beta_t$ has non-negative, non-stationary and non-independent increments (see \cite{MSheff04}).

Our first aim  is to find the unique strong solution to the fractional Cauchy problem
\begin{equation}\label{frac-cauchy-problem}
\partial_t^\beta u(x,t) = \triangle_\mathcal{M} u(x,t),\ \  t>0, x\in \mathcal{M}; \quad u(x,0)=f(x), \  \ x \in \mathcal{M}
 \end{equation}
 where $f$ is  a well specified initial value, and   $\beta \in (0,1)$. The stochastic solution of this equation turns out to be a time-changed Brownian motion, in particular, we get
\begin{equation*}
u(m,t) = \mathbb{E}f(B^m_{E^\beta_t}), \quad m \in \mathcal{M}, \; t>0
\end{equation*}
where $B^m_t$ is a Brownian motion started at $m$ and $E^\beta_t$ is an inverse  stable subordinator  related to the time fractional operator $\partial_t^\beta$ in the sense of Dzhrbashyan-Caputo.

We also consider Cauchy problems involving space fractional operators:
\begin{align}
\mathbb{D}^{\Psi}_\mathcal{M} f(x) = & \int_0^\infty \left( P_s f(x) - f(x) \right) \nu(ds), \quad x\in \mathcal{M} \label{D-op-one}
\end{align}
where $f(x)$ is a well defined function on $\mathcal{M}$, $\nu$ is the L\'{e}vy measure (such that $\int (1 \wedge s)\nu(ds)< \infty$) defining the L\'{e}vy symbol $\Psi$, and $P_t =e^{- t \triangle}$ is the heat semigroup in
\begin{equation}
L^2(\mathcal{M}) = L^2(\mathcal{M}, \mu) := \left\lbrace f:\, \int_\mathcal{M} f^2 d\mu < \infty \right\rbrace.
\end{equation}

We study the heat type  Cauchy problem on $\mathcal{M}$
 \begin{equation}\label{space-frac-cauchy-problem}
\partial_t u(x,t) = \mathbb{D}^\Psi_{\mathcal{M}} u(x,t),\ \  t>0, x\in \mathcal{M}; \quad u(x,0)=f(x), \  \ x \in \mathcal{M}
 \end{equation}
 where $f$ is  a well specified initial value. The stochastic solution of this equation turns out to be  a time-changed Brownian motion (this is also called a subordinate Brownian motion), in particular, we get
\begin{equation*}
u(m,t) = \mathbb{E}f(B^m_{S^\Psi_t}), \quad m \in \mathcal{M}, \; t>0
\end{equation*}
where $B^m_t$ is a Brownian motion started at $m$ and $S^\Psi_t$ is a positive, nondecreasing L\'evy  process with Laplace symbol $\Psi$ with
\begin{equation}\label{gen-lap-transform}
\mathbb{E}\exp (- \xi S^\Psi_t) = \exp (-t \Psi(\xi)).
\end{equation}
The operator $\mathbb{D}^\Psi_{\mathcal{M}}$  turns out to be the infinitesimal generator of the semigroup $P_t^\Psi:=\exp(-t\mathbb{D}^\Psi_{\mathcal{M}})$, $t\geq 0$ on $L^2(\mathcal{M})$: see, for example, \cite{appl2009}.

 In summary, Brownian motion time changed by an inverse subordinator yields a stochastic solution to a time fractional cauchy problem, and Brownian motion time changed by a subordinator which is a positive, nondecreasing L\'evy process, yields a  heat type Cauchy problem with space fractional operator.


Finally, we study the power spectrum of the random fields that are  composed with time-changed  Brownian motions, and find out different covariance structures. In particular, such covariances show different rates of convergence for the covariance of high frequency components.

\subsection{Notations}

\begin{itemize}
\item $B^m_t$, $t\geq 0$, is the Brownian motion on $\mathcal{M}$ started at $m$;
\item $T(m)$, $m \in \mathcal{M}$, is a Gaussian random field indexed by $\mathcal{M}$;
\item $S_t=S^\Psi_t$, $t\geq 0$, is a subordinator with Laplace exponent $\Psi$;
\item $E_t=E^\beta_t$, $t\geq 0$, is an inverse to a stable subordinator $S^\beta_t$, $t>0$, of order $\beta \in (0,1)$;
\item $\mathfrak{T}^\Psi_t(m) = T(B^m_{S_t})$, $t>0$, $m \in \mathcal{M}$;
\item $\mathfrak{T}^\beta_t(m) = T(B^m_{E_t})$, $t>0$, $m \in \mathcal{M}$, $t>0$, $m \in \mathcal{M}$;
\item $T^\Psi_t(m) = \mathbb{E}[\mathfrak{T}^\Psi_t(m) | \mathfrak{F}_T]$, $t>0$, $m \in \mathcal{M}$;
\item $T^\beta_t(m) =\mathbb{E}[\mathfrak{T}^\beta_t(m) | \mathfrak{F}_T]$, $t>0$, $m \in \mathcal{M}$;
\item $\mathfrak{F}_T$ is the $\sigma$-algebra generated by the random field $T$ on $\mathcal{M}$.
\end{itemize}

\section{Preliminaries}

Let  $(\mathcal{M},d, \mu)$ be a manifold with a metric structure where $(\mathcal{M}, d)$ is a locally separable metric space and $\mu$ is a Radon measure supported on $\mathcal{M}$. Let $L^2(\mathcal{M}, \mu)$ be the space of square integrable real-valued functions on $\mathcal{M}$ with finite norm
$$\| u \|_{\mu} : = \left( \int_{\mathcal{M}} |u(m)|^2 \, \mu(dm) \right)^\frac{1}{2}.$$
We are interested in studying the solutions to
\begin{equation}
\triangle \phi + \lambda \phi =0 \label{eigenvalue-problem}
\end{equation}
and
heat equation \eqref{heatM}
from a probabilistic point of view.

The fundamental solution to  heat equation \eqref{heatM} on $\mathcal{M}$ is a
continuous function $p=p(x,y,t)$ on $\mathcal{M}\times \mathcal{M}\times (0,+\infty)$
with
\begin{equation}
\lim_{t \downarrow 0} p(\cdot, y, t) = \delta_y(\cdot), \quad \lim_{t \downarrow 0} p(x, \cdot, t) = \delta_x (\cdot)
\end{equation}
where $\delta_m$ is the Dirac delta function  for $m \in \mathcal{M}$. Furthermore, $p$ is unique and symmetric in the two space variables. Given a continuous initial datum $u_0=f$ we write
\begin{equation}
u(m, t) = P_t f(m) = \mathbb{E} f(B^m_t) = \int_\mathcal{M} p(m,y,t)f(y)\mu(dy).
\end{equation}
One immediately verifies that $P_t$ satisfies the semigroup property: $P_t P_s = P_{t+s}$. We say that $B^m_t$, $t>0$ is a Brownian motion on $\mathcal{M}$ starting at $m \in \mathcal{M}$, that is a measurable map from the probability space $(\Omega, \mathfrak{F}, P )$ to the measurable space $(\mathcal{M}, \mathcal{B}(\mathcal{M}), \mu)$. Furthermore, $p(x,y,t)$ is the  fundamental solution to the heat equation \eqref{heatM}
with point source  initial condition, and $B^m_t$, $t>0$ is a diffusion with continuous trajectories such that
\begin{equation*}
\P\{ B^m_t \in M \} = \int_M p(m,y,t)\mu(dy)
\end{equation*}
for any Borel set $M \subset \mathcal{M}$.

\subsection{Eigenvalue problems and heat kernels}

We follow the presentation in  Section I.3 in Chavel \cite{Chavel84} for stating the following eigenvalue problems.

{\bf Closed eigenvalue problem:} Let $\mathcal{M}$ be a compact, connected  manifold.
Find all real numbers $\lambda $ for which there exist a nontrivial solution $\phi\in C^2(\mathcal{M})$ to
\eqref{eigenvalue-problem}.

{\bf Dirichlet eigenvalue problem:}  For $\partial \mathcal{M}\neq \emptyset,$ $\bar{\mathcal{M}}$ compact and connected, find all real numbers $\lambda$ for which there exist a nontrivial solution $\phi\in C^2(\mathcal{M})\cap C^0(\bar{\mathcal{M}})$ to \eqref{eigenvalue-problem}, satisfying the boundary condition
$$
\phi=0
$$
on $\partial \mathcal{M}$.

{\bf Neumann eigenvalue problem:} For the boundary $\partial \mathcal{M}\neq \emptyset,$ $\bar{\mathcal{M}}$ compact and connected, find all real numbers $\lambda$ for which there exist a nontrivial solution $\phi\in C^2(\mathcal{M})\cap C^1(\bar{\mathcal{M}})$ to \eqref{eigenvalue-problem}, satisfying the boundary condition
$$
\partial_{\bf n} \phi=0
$$
on $\partial \mathcal{M}$ ($\partial_{\bf n}$ is the outward unit normal vector field on  $\partial \mathcal{M}$).

{\bf Mixed eigenvalue problem:} For $\partial \mathcal{M}\neq \emptyset,$ $\bar{\mathcal{M}}$ compact and connected, $\mathcal{N}$ an open submanifold of $\partial \mathcal{M}$,  find all real numbers $\lambda$ for which there exist a nontrivial solution $\phi\in C^2(\mathcal{M})\cap C^1(\mathcal{M}\cup\mathcal{N})\cap  C^0(\bar{\mathcal{M}})$ to \eqref{eigenvalue-problem}, satisfying the boundary conditions
$$
\phi=0\ \mathrm{on}\  \partial \mathcal{M}, \ \partial_{\bf n}\phi=0 \  \mathrm{on }\ \mathcal{N}.
$$
on $\partial \mathcal{M}$.

\begin{tm}
(\citet[page 8]{Chavel84}) For each one of the  eigenvalue problems, the set of eigenvalues consists of a sequence
$$0 \leq \lambda_1 < \lambda_2 \leq \cdots  \uparrow + \infty,$$ and each associated eigenspace is finite dimensional. Eigenspaces belonging to distinct eigenvalues are orthonormal in $L^2(\mathcal{M})$ and $L^2(\mathcal{M})$ is the direct sum of all eigenspaces. Furthermore, each eigenfunction is $C^\infty$ on $\mathcal{M}$.
\end{tm}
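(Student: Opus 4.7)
The plan is to reduce the theorem to the spectral theorem for compact self-adjoint operators by a standard functional-analytic argument, and then invoke elliptic regularity for the smoothness claim. I will handle the four boundary settings uniformly by specifying in each case the form domain $V \subset L^2(\mathcal{M})$ on which the Dirichlet energy
\[
Q(\phi,\psi) = \int_{\mathcal{M}} \langle \nabla \phi, \nabla \psi\rangle_g \, d\mu
\]
is defined: $V = H^1(\mathcal{M})$ in the closed and Neumann cases, $V = H^1_0(\mathcal{M})$ in the Dirichlet case, and $V$ equal to the closure in $H^1(\mathcal{M})$ of the smooth functions vanishing on $\partial\mathcal{M}\setminus\mathcal{N}$ in the mixed case. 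Integration by parts on $\mathcal{M}$ (Green's identity), together with the imposed boundary conditions, shows that $Q$ is the quadratic form of a nonnegative symmetric operator extending $-\triangle$ on the appropriate test functions.

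The first step is to produce a self-adjoint realization $A \geq 0$ of $-\triangle$ on each $V$ by the Friedrichs extension applied to $Q$. Then I would verify that the resolvent $R = (A+I)^{-1}\colon L^2(\mathcal{M})\to L^2(\mathcal{M})$ is compact: the identity $\|Rf\|_V^2 \leq Q(Rf,Rf) + \|Rf\|_\mu^2 = \langle f,Rf\rangle_\mu \leq \|f\|_\mu \|Rf\|_\mu$ shows $R$ maps $L^2(\mathcal{M})$ continuously into $V$, and since $\mathcal{M}$ is compact the embedding $V \hookrightarrow L^2(\mathcal{M})$ is compact by Rellich--Kondrachov. Hence $R$ is a compact self-adjoint operator on $L^2(\mathcal{M})$.

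Now I would apply the Hilbert--Schmidt spectral theorem to $R$. Its spectrum consists of a sequence of real eigenvalues accumulating only at $0$, with finite-dimensional eigenspaces, and the corresponding eigenvectors form an orthonormal basis of $L^2(\mathcal{M})$. Translating $R\phi = \mu\phi$ back into $A\phi = (1/\mu - 1)\phi$ gives eigenvalues $0\leq\lambda_1<\lambda_2\leq\cdots\uparrow+\infty$ of $A$ with finite-dimensional eigenspaces, orthogonality of eigenspaces across distinct $\lambda_k$ (this also follows directly from the symmetry identity $(\lambda_j-\lambda_k)\langle\phi_j,\phi_k\rangle_\mu=\langle A\phi_j,\phi_k\rangle-\langle\phi_j,A\phi_k\rangle=0$), and $L^2(\mathcal{M})=\bigoplus_k \ker(A-\lambda_k)$. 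Nonnegativity of the eigenvalues follows from $\lambda_k\|\phi_k\|_\mu^2 = Q(\phi_k,\phi_k)\geq 0$.

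The remaining step is the $C^\infty$ regularity of eigenfunctions. Each eigenfunction $\phi$ is a weak solution of the elliptic equation $\triangle\phi = -\lambda\phi$ with smooth coefficients in local charts, so standard interior elliptic regularity (bootstrapping through $H^k$ estimates) yields $\phi\in C^\infty(\mathcal{M})$. The main technical hurdle is the compact embedding $V\hookrightarrow L^2(\mathcal{M})$ together with checking, case by case, that the Friedrichs extension of $Q$ on the chosen $V$ really enforces the prescribed boundary condition in a weak sense; once the correct form domain is identified, everything else is a direct application of classical abstract theorems that hold uniformly in the four settings.
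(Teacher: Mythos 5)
The paper does not prove this statement at all: it is quoted verbatim from Chavel (\emph{Eigenvalues in Riemannian Geometry}, p.~8) as background, so there is no internal proof to compare against. Your functional-analytic route (Friedrichs extension of the Dirichlet form on the appropriate form domain, compactness of the resolvent via Rellich--Kondrachov, the Hilbert--Schmidt spectral theorem, then interior elliptic bootstrapping for $C^\infty$ regularity) is the standard modern proof and is sound for almost all of the assertions. Chavel's own treatment leans more on the variational (Rayleigh quotient, max--min) characterization and on heat-kernel/parametrix constructions, but the content is equivalent; your version is arguably cleaner for handling the four boundary conditions uniformly.

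There is, however, one genuine gap. The statement asserts the \emph{strict} inequality $\lambda_1 < \lambda_2$, i.e.\ that the lowest eigenvalue is simple. This does not follow from the abstract spectral theorem for compact self-adjoint operators, which only delivers a nondecreasing sequence with finite multiplicities; simplicity of the bottom eigenvalue is where the hypothesis that $\mathcal{M}$ is \emph{connected} enters. The usual argument is: a minimizer $\phi$ of the Rayleigh quotient $Q(\phi,\phi)/\|\phi\|_\mu^2$ over $V$ has the property that $|\phi|$ is also a minimizer, hence also an eigenfunction; the strong maximum principle (or Harnack) then forces $|\phi|>0$ on the connected interior, so every ground state has a definite sign, and two mutually orthogonal functions cannot both be a.e.\ positive. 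Without this step your proof establishes $0\le\lambda_1\le\lambda_2\le\cdots\uparrow+\infty$ but not $\lambda_1<\lambda_2$. A secondary, smaller issue you already flag yourself: in the Neumann and mixed cases the condition $\partial_{\mathbf n}\phi=0$ is a \emph{natural} boundary condition not encoded in the form domain, so recovering it pointwise requires boundary elliptic regularity up to the piecewise-smooth boundary; this should be stated as an appeal to a specific regularity theorem rather than left as a ``technical hurdle.''
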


In the closed and Neumann eigenvalue problems we have $\lambda_1=0$ and in the Dirichlet and mixed ($\mathcal{N}\neq \mathcal{M}$) eigenvalue problems we have $\lambda_1>0$.

\begin{tm}\cite{Chavel84}
In the case of closed  eigenvalue problem,
each $\phi_j$ is as smooth as the heat kernel $p$. In particular, $p\in C^\infty$ implies $\phi_j\in C^\infty$ for every $j=1,2,\cdots$.
And in this case
\begin{equation}
p(x,y,t)=\sum_{j=1}^\infty e^{-\lambda_j t}\phi_j(x)\phi_j(y) \label{kernel-closed-e}
\end{equation}
with convergence  absolute, and uniform, for each $t>0$.
\end{tm}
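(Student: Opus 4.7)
The plan is to deduce everything from the fact that, on a compact manifold, the heat semigroup $P_t$ is a compact self-adjoint operator on $L^2(\mathcal{M})$ (self-adjointness comes from symmetry of $p$, compactness was noted in the introduction), so the spectral theorem gives an orthonormal basis of simultaneous eigenfunctions $\{\phi_j\}$ with eigenvalues of the form $e^{-\lambda_j t}$ (the common eigenfunctions of the commuting family $P_t$, with eigenvalues multiplicative in $t$ by $P_t P_s = P_{t+s}$). These are exactly the solutions of $\triangle \phi_j + \lambda_j \phi_j = 0$ since $\triangle$ is the generator of $P_t$.

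For the regularity claim, I would use the eigenfunction identity
\begin{equation*}
\phi_j(x) \;=\; e^{\lambda_j t}(P_t \phi_j)(x) \;=\; e^{\lambda_j t}\int_{\mathcal{M}} p(x,y,t)\,\phi_j(y)\,\mu(dy).
\end{equation*}
Since $p(\cdot, y, t)$ inherits its regularity in $x$ from the heat kernel (uniformly in $y \in \mathcal{M}$ by compactness), differentiation under the integral propagates any smoothness of $p$ to $\phi_j$; in particular $p \in C^\infty$ forces $\phi_j \in C^\infty$.

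For the expansion itself, I would expand $x \mapsto p(x,y,t)$ with respect to the basis $\{\phi_j\}$ in $L^2(\mathcal{M})$; the $j$-th Fourier coefficient equals $(P_t \phi_j)(y) = e^{-\lambda_j t}\phi_j(y)$ by symmetry of $p$ and the eigenfunction equation, giving the identity \eqref{kernel-closed-e} in $L^2$. To upgrade this to pointwise, absolute and uniform convergence for fixed $t>0$, I would first apply the semigroup property and symmetry to obtain
\begin{equation*}
p(x,x,2t) \;=\; \int_{\mathcal{M}} p(x,y,t)^2\,\mu(dy) \;=\; \sum_{j=1}^{\infty} e^{-2\lambda_j t}\,\phi_j(x)^2
\end{equation*}
by Parseval. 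The partial sums on the right are continuous, monotonically increasing in $N$, and converge pointwise to the continuous function $p(x,x,2t)$ on the compact space $\mathcal{M}$; Dini's theorem then yields uniform convergence, so the tails $\sum_{j\geq N} e^{-2\lambda_j t}\phi_j(x)^2$ tend to zero uniformly in $x$. Replacing $2t$ by $t$ and applying Cauchy–Schwarz,
\begin{equation*}
\sum_{j\geq N} e^{-\lambda_j t}\,|\phi_j(x)\phi_j(y)| \;\leq\; \Big(\textstyle\sum_{j\geq N} e^{-\lambda_j t}\phi_j(x)^2\Big)^{1/2}\Big(\textstyle\sum_{j\geq N} e^{-\lambda_j t}\phi_j(y)^2\Big)^{1/2},
\end{equation*}
gives uniform absolute convergence in $(x,y) \in \mathcal{M}\times\mathcal{M}$.

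The step I expect to be the main obstacle is the passage from $L^2$-convergence of the eigenfunction expansion to uniform, absolute pointwise convergence; this is what forces the use of the semigroup/Parseval trick together with Dini's theorem (which in turn relies essentially on compactness of $\mathcal{M}$ and continuity of $p$). Once those tail bounds are in hand, absolute convergence via Cauchy–Schwarz and identification of the limit with $p(x,y,t)$ (by, e.g., integrating against continuous test functions) are routine.
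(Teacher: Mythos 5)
Your proof is correct and is essentially the classical Mercer-type argument that the paper delegates entirely to the cited reference \cite{Chavel84}: spectral decomposition of the compact self-adjoint heat semigroup to produce the eigenbasis, the identity $p(x,x,2t)=\sum_j e^{-2\lambda_j t}\phi_j(x)^2$ via Parseval, Dini's theorem on the compact manifold for uniform convergence of the diagonal series, and Cauchy--Schwarz for uniform absolute convergence of the full expansion. The paper states this theorem without proof, so there is no in-paper argument to compare against; your reconstruction matches the standard one in the cited source.
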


\begin{tm}\cite[page 169]{Chavel84}
Given a  connected manifold $\mathcal{M}$ with  piecewise $C^\infty$ boundary and compact closure, there exists a complete orthonormal basis,
$$\{\varphi_1, \varphi_2,\varphi_3,\cdots\}$$
of $L^2(\mathcal{M})$ consisting of Dirichlet eigenfunctions of $\Delta$, with $\varphi_j$ having eigenvalue $\lambda_j$ satisfying
$$0< \lambda_1 < \lambda_2 \leq \cdots  \uparrow + \infty.$$
In particular, each eigenvalue has finite multiplicity. Each
$$\varphi_j \in C^\infty(\mathcal{M})\cap C^1(\bar{\mathcal{M}}).$$
And in this case the fundamental solution, the heat kernel, is given by
\begin{equation}
p(x,y,t)=\sum_{j=1}^\infty e^{-\lambda_j t}\varphi_j(x)\varphi_j(y)
\end{equation}
with convergence  absolute, and uniform, for each $t>0$.
\end{tm}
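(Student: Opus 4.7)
The plan is to follow the standard functional-analytic route: realize $-\Delta$ with Dirichlet boundary conditions as a positive self-adjoint operator via Friedrichs extension, verify that it has compact resolvent, apply the spectral theorem for compact self-adjoint operators, bootstrap to smoothness by elliptic regularity, and finally express the heat kernel through the associated semigroup and its $L^2$ spectral decomposition.

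First I would introduce the Dirichlet energy
\[
\mathcal{E}(u,v)=\int_\mathcal{M}\langle \nabla u,\nabla v\rangle_g\,d\mu,\qquad u,v\in H^1_0(\mathcal{M}),
\]
where $H^1_0(\mathcal{M})$ is the closure of $C^\infty_c(\mathcal{M})$ in $H^1(\mathcal{M})$. Since $\bar{\mathcal{M}}$ is compact with piecewise $C^\infty$ boundary, the Poincar\'e inequality yields the coercivity $\mathcal{E}(u,u)\geq c\|u\|_{L^2}^2$, and the Friedrichs associate of $\mathcal{E}$ is a positive self-adjoint operator $A=-\Delta$ on $L^2(\mathcal{M})$ with spectral infimum $\lambda_1>0$. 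The compactness of $\bar{\mathcal{M}}$ together with the regularity of $\partial\mathcal{M}$ moreover delivers Rellich-Kondrachov: the inclusion $H^1_0(\mathcal{M})\hookrightarrow L^2(\mathcal{M})$ is compact, hence $A^{-1}\colon L^2\to L^2$ is compact and self-adjoint. The spectral theorem for such operators then produces a complete orthonormal basis $\{\varphi_j\}_{j\geq 1}$ of $L^2(\mathcal{M})$ with $A\varphi_j=\lambda_j\varphi_j$, $0<\lambda_1\leq\lambda_2\leq\cdots\uparrow\infty$, and each eigenspace finite-dimensional.

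Next I would invoke elliptic regularity. Interior Schauder and $L^p$ estimates applied inductively to $\Delta\varphi_j=-\lambda_j\varphi_j$ yield $\varphi_j\in C^\infty(\mathcal{M})$, while standard boundary regularity for the Dirichlet problem on the smooth portions of $\partial\mathcal{M}$, together with a localized analysis near the singular strata of the boundary, gives $\varphi_j\in C^1(\bar{\mathcal{M}})$.

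Finally, defining the heat semigroup $P_t=e^{-tA}$ by functional calculus, the spectral decomposition of any $f\in L^2(\mathcal{M})$ implies
\[
P_tf(x)=\sum_{j=1}^\infty e^{-\lambda_jt}\langle f,\varphi_j\rangle_\mu\,\varphi_j(x),
\]
which suggests the kernel identification $p(x,y,t)=\sum_{j\geq 1}e^{-\lambda_jt}\varphi_j(x)\varphi_j(y)$. Absolute and uniform convergence for each fixed $t>0$ follows from the sup-norm bound $\|\varphi_j\|_\infty\leq C\lambda_j^{n/4}$ (a Sobolev-embedding consequence of the eigenequation) combined with the Weyl asymptotic $\lambda_j\sim Cj^{2/n}$, which makes $e^{-\lambda_jt}\|\varphi_j\|_\infty^2$ decay faster than any polynomial in $j$. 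I expect the main obstacle to be the $C^1(\bar{\mathcal{M}})$ regularity at the singular locus of $\partial\mathcal{M}$, where corners and edges force one to appeal to the theory of elliptic problems on piecewise smooth domains rather than the clean smooth-boundary Schauder theory; the remainder of the argument is the spectral-theoretic template already used for the preceding closed-eigenvalue theorem.
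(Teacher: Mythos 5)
You should first note that the paper does not prove this statement at all: it is quoted verbatim from Chavel \cite[p.~169]{Chavel84} and used as an imported black box, so there is no ``paper's proof'' to match. Your outline --- Friedrichs extension of the Dirichlet form, Rellich--Kondrachov compactness, the spectral theorem for the compact resolvent, elliptic bootstrap, then the semigroup expansion --- is the standard variational route and is essentially how the result is established in Chavel, except that Chavel proceeds in the opposite order for the last step: he first \emph{constructs} the fundamental solution $p(x,y,t)$ by a parametrix/Duhamel argument and only then proves the eigenfunction expansion, whereas you define $P_t=e^{-tA}$ spectrally and try to read the kernel off the series. Your order of argument still owes a step: one must show that the spectral series actually \emph{is} the fundamental solution, i.e.\ that it is continuous on $\mathcal{M}\times\mathcal{M}\times(0,\infty)$ and satisfies the $\delta$ initial condition; ``suggests the kernel identification'' is where that work is hidden. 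The convergence estimate you give (a sup-norm bound $\|\varphi_j\|_\infty\lesssim\lambda_j^{n/4}$, obtainable from ultracontractivity or Sobolev embedding, plus Weyl asymptotics) does close this gap for continuity and uniform convergence, so this is a fixable presentational issue rather than a wrong approach.

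The one genuine gap is the $C^1(\bar{\mathcal{M}})$ regularity at the singular strata of the boundary, which you correctly flag as the main obstacle but then dispose of by a generic appeal to ``the theory of elliptic problems on piecewise smooth domains.'' That appeal does not work as stated: for a planar sector of opening $\omega>\pi$ the first Dirichlet eigenfunction behaves like $r^{\pi/\omega}\sin(\pi\theta/\omega)$ near the vertex, whose gradient blows up, so Dirichlet eigenfunctions on a general domain with piecewise smooth boundary are \emph{not} $C^1$ up to the closure. The conclusion $\varphi_j\in C^\infty(\mathcal{M})\cap C^1(\bar{\mathcal{M}})$ therefore depends on the precise convexity/angle conditions built into Chavel's notion of a normal domain with piecewise $C^\infty$ boundary, and any honest proof has to either import those hypotheses explicitly or restrict the class of corners allowed. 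Everything else in your sketch (coercivity via Poincar\'e, compactness of $H^1_0\hookrightarrow L^2$, finite multiplicities, $\lambda_1>0$, interior smoothness) is correct and standard.
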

 For each of the four eigenvalue problems,  as $k\to\infty$
\begin{equation}\label{eigenvalue-asymptotics}
(\lambda_k)^{n/2}\sim (2\pi)^nk/\omega_nV(\mathcal{M})
\end{equation}
where $\omega_n$ is the volume of the unit disk in $\mathbb{R}^n$.
\begin{tm} \label{thm-toharmonic}
(\citet[page 141]{Chavel84}) For any $f \in L^2(\mathcal{M}, \mu)$, the function $P_t f(m)$ converges uniformly, as $t\uparrow +\infty$, to a harmonic function on $\mathcal{M}$. Since $\mathcal{M}$ is compact, the limit function is constant.
\end{tm}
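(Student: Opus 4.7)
The plan is to use the spectral expansion \eqref{kernel-closed-e} of the heat kernel in the closed eigenvalue problem together with the fact that on a compact connected manifold the first eigenvalue $\lambda_1=0$ is simple with constant normalized eigenfunction $\phi_1 \equiv V(\mathcal{M})^{-1/2}$. Substituting the series into $P_t f(m) = \int_\mathcal{M} p(m,y,t) f(y)\,\mu(dy)$ and integrating term by term (justified by the absolute and uniform convergence stated in \eqref{kernel-closed-e}), one reads off the candidate limit as the constant
\begin{equation*}
\bar f := \phi_1(m)\,\langle f,\phi_1\rangle_\mu = \frac{1}{V(\mathcal{M})}\int_\mathcal{M} f\,d\mu,
\end{equation*}
which is trivially harmonic on $\mathcal{M}$. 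The task then reduces to estimating the tail series in the difference $P_t f(m) - \bar f$.

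The core estimate comes from Cauchy--Schwarz in $\ell^2(\{j\geq 2\})$ applied to
\begin{equation*}
P_t f(m) - \bar f = \sum_{j\geq 2} e^{-\lambda_j t}\,\phi_j(m)\,\langle f,\phi_j\rangle_\mu,
\end{equation*}
together with Parseval's identity, giving
\begin{equation*}
|P_t f(m) - \bar f|^2 \leq \Big(\sum_{j\geq 2} e^{-2\lambda_j t}\phi_j(m)^2\Big)\,\|f\|_\mu^2.
\end{equation*}
Using $\lambda_j\geq\lambda_2$ for $j\geq 2$, I would factor out $e^{-\lambda_2 t}$ and recognize what remains as the diagonal of the spectral expansion of the heat kernel, obtaining
\begin{equation*}
\sum_{j\geq 2} e^{-2\lambda_j t}\phi_j(m)^2 \leq e^{-\lambda_2 t}\, p(m,m,t).
\end{equation*}

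The main obstacle is upgrading this $L^2$-flavoured bound to a genuine uniform-in-$m$ estimate. This is handled by noting that for every fixed $t>0$ the map $m\mapsto p(m,m,t)$ is continuous on the compact manifold $\mathcal{M}$, and from the spectral representation it is non-increasing in $t$; hence $\sup_{m,\, t\geq 1} p(m,m,t) \leq \sup_{m} p(m,m,1) =: C(\mathcal{M}) < \infty$. Combining the displays above then yields
\begin{equation*}
\sup_{m\in\mathcal{M}} |P_t f(m) - \bar f| \leq C(\mathcal{M})^{1/2}\, e^{-\lambda_2 t/2}\, \|f\|_\mu \longrightarrow 0, \qquad t\to +\infty,
\end{equation*}
which is the required uniform convergence to the harmonic function $\bar f$. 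The final sentence of the statement is then just the maximum principle on the compact connected manifold $\mathcal{M}$, under which any harmonic function is constant, consistent with the explicit form of $\bar f$.
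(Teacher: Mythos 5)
Your proof is correct, and since the paper itself supplies no argument for this theorem (it is quoted from Chavel, whose proof is exactly this spectral decomposition), your write-up fills in the standard details along essentially the same route: expand $P_tf$ in the eigenbasis, isolate the $\lambda_1=0$ mode (constant because $\mathcal{M}$ is connected, so the zero eigenvalue is simple), and kill the tail uniformly via Cauchy--Schwarz, Bessel's inequality, the spectral gap $\lambda_2>0$, and the bound $\sup_{m}p(m,m,1)<\infty$ on the compact manifold. The only cosmetic caveat is that your uniform estimate is stated for $t\geq 1$, which is all that is needed for the limit $t\uparrow+\infty$.
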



\subsection{Brownian motion on $\mathcal{M}$}

Let $P_t$ be a strongly continuous semigroup on $L^2(\mathcal{M}, \mu)$ and $\mathcal{A}$ be the infinitesimal generator such that
\begin{equation}
\lim_{t \downarrow 0} \left\| \frac{P_t u - u}{t} -  \mathcal{A} u \right\|_{\mu} = 0
\label{limitE}
\end{equation}
for all $u \in  Dom(\mathcal{A}) = \{u \in L^2(\mathcal{M}, \mu) \; \textrm{such that the limit \eqref{limitE} exists} \}$.

See Emery \cite{emery} for a discussion of processes on manifolds.
We have the following result concerning the operator $\mathcal{A}= \triangle$ on $\mathcal{M}$.

\begin{prop}Let $\mathcal{M}$ be a connected and  compact manifold (without boundary!).
The stochastic solution to the Cauchy problem
\begin{equation}
\left\lbrace \begin{array}{l}
 \partial_t u(m,t) =\triangle u(m,t) , \quad m \in \mathcal{M}, \; t>0\\
 u(m,0)=f(m), \quad m \in \mathcal{M}
\end{array} \right .
\end{equation}
is represented by the Brownian motion $B^m_t$, $t>0$ starting from $m \in \mathcal{M}$ at $t=0$ with
\begin{equation}
\begin{split}
u(m,t)&=P_t f(m) =\mathbb{E}f( B^m_t)= \int_{\mathcal{M}}p(m,y,t)f(y)\mu(dy)\\
&= \sum_{j =1}^\infty e^{-t \lambda_j} \kappa_j\, \, \phi_j(m), \quad m \in \mathcal{M}, \; t>0 \label{law-B-on-M}
\end{split}
\end{equation}
where $P_t=\exp(-t\triangle)$ is the semigroup corresponding to Brownian motion  and
\begin{equation}
\kappa_j = \int_\mathcal{M} f(y) \phi_j(y)\mu(dy).
\end{equation}
\end{prop}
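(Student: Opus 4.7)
The plan is to combine the spectral representation of the heat kernel on the closed manifold with the definition of Brownian motion as the diffusion with transition density $p$. First I would unfold the probabilistic representation: by the definition of $B^m_t$ as the diffusion with transition density $p(m,\cdot,t)$, one has $\mathbb{E}f(B^m_t) = \int_\mathcal{M} p(m,y,t)f(y)\mu(dy)$. Since $\mathcal{M}$ is compact without boundary, the closed eigenvalue problem applies, and I can substitute $p(m,y,t) = \sum_{j\geq 1} e^{-\lambda_j t}\phi_j(m)\phi_j(y)$, which converges absolutely and uniformly for each $t>0$ by the theorem quoted from Chavel. Exchanging sum and integral (justified by uniform convergence on $\mathcal{M}$, which has finite measure) gives $u(m,t) = \sum_j e^{-\lambda_j t}\kappa_j\phi_j(m)$ with $\kappa_j = \int_\mathcal{M} f(y)\phi_j(y)\mu(dy)$, as claimed.

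Next I would verify that this function genuinely solves the Cauchy problem. The Weyl-type asymptotics $\lambda_k^{n/2}\sim (2\pi)^n k/(\omega_n V(\mathcal{M}))$ in \eqref{eigenvalue-asymptotics} force $\lambda_j\to\infty$ polynomially in $j$, so for every fixed $t_0>0$ and every integer $k\ge 0$ the series $\sum_j \lambda_j^{k} e^{-\lambda_j t}\kappa_j\phi_j(m)$ is dominated by a convergent series on $\mathcal{M}\times [t_0,\infty)$ (using $\kappa_j\in\ell^2$ from $f\in L^2$, $|\phi_j|_\infty$ bounds from the smoothness stated in the theorem, and rapid decay of $e^{-\lambda_j t_0}$). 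This legitimises term-by-term differentiation in $t$ and term-by-term application of $\triangle$; since $\triangle\phi_j=-\lambda_j\phi_j$, the equation $\partial_t u=\triangle u$ holds term by term and hence for the full series.

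For the initial condition I would use that $\{\phi_j\}$ is a complete orthonormal system of $L^2(\mathcal{M})$, so $f = \sum_j \kappa_j\phi_j$ in $L^2$. Dominated convergence in $\ell^2$ then yields $u(\cdot,t)\to f$ in $L^2$ as $t\downarrow 0$; if one wants pointwise convergence one invokes the continuity of $f$ together with the approximate-identity property of $p(m,\cdot,t)$ as $t\downarrow 0$ built into the heat kernel setup. Uniqueness follows from a standard energy argument on the closed manifold: if $v$ is another smooth solution with $v(\cdot,0)=0$, then integration by parts (no boundary terms since $\partial\mathcal{M}=\emptyset$) gives $\tfrac{d}{dt}\|v(\cdot,t)\|_\mu^2 = -2\int_\mathcal{M}|\nabla v|^2\,d\mu\leq 0$, forcing $v\equiv 0$.

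The main technical obstacle I anticipate is purely bookkeeping, namely controlling the interchange of sum, integral, and derivative uniformly on $\mathcal{M}$; the Weyl asymptotics and the smoothness of the $\phi_j$ provided by Chavel's theorem make this routine, so the proposition really is a direct corollary of the spectral decomposition of $p(m,y,t)$ and the definition of $B^m_t$.
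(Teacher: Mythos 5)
Your argument is correct and is exactly the route the paper intends: the paper states this proposition without any written proof, treating it as an immediate consequence of the quoted Chavel results (the spectral expansion $p(x,y,t)=\sum_j e^{-\lambda_j t}\phi_j(x)\phi_j(y)$ for the closed eigenvalue problem and the definition of $P_t f(m)=\mathbb{E}f(B^m_t)$), and you have simply supplied the routine verifications the authors omit. The only bookkeeping detail worth noting is that the sup-norm control on the $\phi_j$ that you invoke for term-by-term differentiation is not in the Chavel theorem you cite but in the bound $\|\phi_j\|_\infty\leq c_1\lambda_j^{(n-1)/4}$ stated later in the paper; since the factor $e^{-\lambda_j t_0}$ beats any polynomial in $\lambda_j$, this is harmless.
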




Let $\tau_\mathcal{M}(B^m)=\inf\{t>0:\ B^m_t \notin \mathcal{M}\}$ be the first exit time of Brownian motion from $\mathcal{M}$.
The heat equation with Dirichlet boundary conditions is as follows:
\begin{prop}
Let $\mathcal{M}$ be a connected manifold with piecewise $C^\infty$ boundary, and with compact closure.
The stochastic solution to
\begin{equation}
\left\lbrace \begin{array}{l}
 \partial_t u(m,t) =\triangle u(m,t), \quad m \in \mathcal{M}, \; t>0\\
 u(m,0)=f(m), \quad m\in \mathcal{M}\\
 u(m,t)=0 \quad m \in \partial \mathcal{M}, t>0
\end{array} \right .
\end{equation}
is represented by the Brownian motion $B^m_t$ killed on the boundary, $t>0$ starting from $m\in \mathcal{M}$ at $t=0$ with
\begin{equation}
u(m,t)=P_t f(m) =\E(f(B^m_t)I(t<\tau_{\mathcal{M}}(B^m)))= \sum_{j =1}^\infty e^{-t \lambda_j} \kappa_j\, \, \varphi_j(m), \quad m \in \mathcal{M}, \; t>0 \label{law-B-on-M}
\end{equation}
where $P_t=\exp(-t\triangle)$ and
\begin{equation}
\kappa_j = \int_\mathcal{M} f(y) \varphi_j(y)\mu(dy).
\end{equation}

\end{prop}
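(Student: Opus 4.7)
The plan is to combine the spectral theorem for the Dirichlet Laplacian (already quoted from Chavel) with the standard probabilistic interpretation of killed Brownian motion, verifying that both representations give the same function and that this function solves the stated Cauchy problem.

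First, I would set up the analytic side. Let $\{\varphi_j\}_{j\geq 1}$ be the Dirichlet eigenbasis with eigenvalues $0<\lambda_1<\lambda_2\leq\cdots\uparrow+\infty$ guaranteed by the earlier theorem, and expand the initial datum as $f=\sum_j \kappa_j\varphi_j$ in $L^2(\mathcal{M},\mu)$, where $\kappa_j=\int_\mathcal{M} f\varphi_j\,d\mu$. Define
\begin{equation}
v(m,t):=\sum_{j=1}^\infty e^{-t\lambda_j}\kappa_j\varphi_j(m).
\end{equation}
The exponentially decaying factors $e^{-t\lambda_j}$, together with the Weyl-type growth \eqref{eigenvalue-asymptotics} of the eigenvalues and standard elliptic estimates bounding $\|\varphi_j\|_\infty$ polynomially in $\lambda_j$, make the series converge absolutely and uniformly on $\mathcal{M}\times[t_0,\infty)$ for every $t_0>0$, and the same applies to termwise time and space derivatives. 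This justifies $\partial_t v=\triangle v$ in $\mathcal{M}\times(0,\infty)$, the boundary condition $v(m,t)=0$ for $m\in\partial\mathcal{M}$ (since each $\varphi_j$ vanishes on $\partial\mathcal{M}$), and convergence $v(\cdot,t)\to f$ in $L^2$ as $t\downarrow 0$.

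Next, I would set up the probabilistic side. Let $w(m,t):=\mathbb{E}\bigl[f(B^m_t)\,I(t<\tau_\mathcal{M}(B^m))\bigr]$. By the strong Markov property and the fact that killed Brownian motion on a domain with piecewise $C^\infty$ boundary has as its transition density the Dirichlet heat kernel $p_D(x,y,t)$, we have
\begin{equation}
w(m,t)=\int_\mathcal{M} p_D(m,y,t)f(y)\,\mu(dy).
\end{equation}
The theorem quoted on the Dirichlet heat kernel gives the uniformly convergent expansion $p_D(x,y,t)=\sum_j e^{-t\lambda_j}\varphi_j(x)\varphi_j(y)$. Integrating term by term against $f$, which is legitimate by the uniform convergence for $t>0$ and the $L^2$ orthonormality of the $\varphi_j$, produces exactly $\sum_j e^{-t\lambda_j}\kappa_j\varphi_j(m)=v(m,t)$.

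Finally, uniqueness ties the two representations to a single $u$. Either invoke the standard uniqueness theorem for the heat equation with Dirichlet boundary data on a bounded domain (via the maximum principle or an energy argument: $\tfrac{d}{dt}\|u\|_{L^2}^2=-2\|\nabla u\|_{L^2}^2\leq 0$ forces zero solutions from zero data), or simply observe that both $v$ and $w$ coincide with $P_t f$, where $P_t=e^{-t\triangle}$ is the Dirichlet heat semigroup. The main obstacle, and the only point requiring care, is justifying the term-by-term operations on the eigenfunction expansion (differentiation in $t$, application of $\triangle$, and interchange with the $\mu$-integral) uniformly up to the boundary; this is handled by the decay of $e^{-t\lambda_j}$ combined with the polynomial bounds on $\varphi_j$ and its derivatives coming from elliptic regularity together with \eqref{eigenvalue-asymptotics}.
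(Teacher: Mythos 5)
Your proposal is correct and follows essentially the same route the paper takes: it assembles the quoted Chavel results (the Dirichlet eigenbasis and the uniformly convergent expansion $p(x,y,t)=\sum_j e^{-\lambda_j t}\varphi_j(x)\varphi_j(y)$ of the Dirichlet heat kernel) with the standard identification of that kernel as the transition density of Brownian motion killed at $\partial\mathcal{M}$, which is precisely the content the paper leaves implicit by stating the proposition without a separate proof. Your added care about termwise differentiation and the $L^2$ recovery of the initial datum is a reasonable fleshing-out of the same argument rather than a different approach.
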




\subsection{Inverse stable subordinators and Mittag-Leffler function}
\label{secSSIP}

The Dzhrbashyan-Caputo fractional derivative \cite{Caputo} is defined for $0<\beta <1$ as
\begin{equation}\label{capFracDer}
D^\beta_t g(t)=\frac{1}{\Gamma(1-\beta)}\int_0^t \frac{d g(r)}{d r}\frac{dr}{(t-r)^\beta} .
\end{equation}
Its Laplace transform
\begin{equation}\label{CaputolT}
\int_0^\infty e^{-st} D^\beta_t g(t)\,ds=s^\beta \tilde g(s)-s^{\beta-1} g(0)
\end{equation}
incorporates the initial value in the same way as the first
derivative, and $D^\beta_t g(t)$ becomes the ordinary  first derivative $d g(t) / d t $ for $\beta=1$. When  $u(t,x)$ a function of time and space variables, then we use $\partial^\beta_t u(t,x)$ for the Dzhrbashyan-Caputo  fractional derivative of order $\beta\in(0,1)$,

For a function  $g(t)$ continuous in  $t\geq 0$,  the Riemann-Liouville  fractional derivative of
order $0<\nu<1$ is defined by
\begin{equation}\label{R-LDef}
\mathbb{D}^\beta_t g(t)=\frac{1}{\Gamma(1-\beta)}\frac{d }{d t}
\int_0^t \frac{g(r)}{(t-r)^\beta} dr .
\end{equation}
Its Laplace transform is given by
\begin{equation}\label{CaputoLT}
\int_0^\infty e^{-st} \mathbb{D}^\beta_t g(t)\,ds=s^\beta \tilde g(s).
\end{equation}
If $g(\cdot)$ is absolutely continuous on bounded intervals (e.g., if the derivative exists everywhere and is
integrable) then the Riemann-Liouville and Dzhrbashyan-Caputo derivatives are related by
\begin{equation}\label{captuto-R-L-derivatives}
D^\beta_t g(t)=\mathbb{D}^\beta_tu(x,t)-\frac{
t^{-\beta}g(0)}{\Gamma (1-\beta)} .
\end{equation}
The Riemann-Liouville fractional derivative is more general, as it does not require the first derivative to exist.
It is also possible to adopt the right-hand side of \eqref{captuto-R-L-derivatives} as the definition of the Dzhrbashyan-Caputo
derivative; see, for example, Kochubei \cite{koch3}.

A stable subordinator $S^{\beta}_t$, $t>0$, $\beta \in (0,1)$,  is (see \cite{Btoi96}) a L\'evy process with non-negative, independent and stationary increments with Laplace transform in \eqref{lapH}.


The inverse stable subordinator  $E^{\beta}$ defined in \eqref{inverse-stable} with density, say $l_\beta$, satisfies
\begin{equation}
\P\{ E^{\beta}_t < x \} = \P\{ S^{\beta}_x > t \}. \label{relPHL}
\end{equation}

According to \cite{BM01, Dov4, meerschaert-straka},  $E^{\beta}_t$ represents a stochastic solution to
\begin{equation*}
\left( \mathbb{D}^{\beta}_{t} + \frac{\partial}{\partial x} \right) l_{\beta}(x,t)=0, \quad x > 0\; ,t>0,\, \beta \in (0,1)
\end{equation*}
subject to the initial and boundary conditions
\begin{equation}
\left\lbrace \begin{array}{l} l_{\beta}(x,0) = \delta(x), \quad x>0,\\ l_{\beta}(0,t) = t^{-\beta}/\Gamma(1-\beta), \quad t>0. \end{array} \right .\label{fracinicond}
\end{equation}
Due to the fact that $S^\beta_t$, $t>0$ has non-negative increments, that is non-decreasing paths, we have that $E^\beta_t$ is a hitting time. Furthermore, for $\beta \to 1$ we get that
\begin{equation*}
\lim_{\beta \to 1} S^\beta_t = t = \lim_{\beta \to 1} E^\beta_t
\end{equation*}
almost surely (\cite{Btoi96}) and therefore $t$ is the elementary subordinator.

In what follows  we will write $f
\approx g$ and $f \lesssim g$ to mean that for some { positive
$c_{1}$ and $c_{2}$, $c_{1}\leq f/g \leq c_{2}$ and $f \leq c_{1}
g$, respectively.} We will also write $f(t) \sim g(t)$, as
$t\rightarrow \infty $,  to mean that $f(t) / g(t) \rightarrow 1$,
as $t\rightarrow \infty $.

Let
 \begin{equation}\label{mittag-leffler-function}
  E_{\beta}(z)=\sum_{n=0}^\infty \frac{z^n}{\Gamma(1+n\beta)}
  \end{equation} be  the Mittag-Leffler function. By equation (3.16) in \cite{mnv-09} the Laplace transform of $E^{\beta}_t$ is given by
\begin{equation}
\mathbb{E} \exp (- \lambda E^{\beta}_t)  = E_{\beta}(-\lambda t^\beta).  \label{lapL}
\end{equation}

Next we state some of the properties of the Mittag-Leffler function. Let $\beta\in (0,1].$
 As we can immediately check $E_{\beta}(0)=1$ and (see for example \cite{KST06, pod99} )
\begin{equation}
0 \leq E_{\beta}(-z^\beta) \leq \frac{1}{1+z^\beta} \leq 1, \quad z \in [0, +\infty). \label{Ebound}
\end{equation}
Indeed, we have that
\begin{equation}
E_\beta(-z^\beta) \approx 1- \frac{z^\beta}{\Gamma(\beta +1)}  \approx \exp\left( - \frac{z^\beta}{\Gamma(\beta +1)} \right), \quad 0<z \ll 1,
\end{equation}
and
\begin{equation}\label{mittag-leffler-large-asymptotics}
E_\beta(-z^\beta) \approx \frac{z^{-\beta}}{\Gamma(1-\beta)}  - \frac{z^{-2\beta}}{\Gamma(1-2\beta)}+ \ldots , \quad z \to +\infty.
\end{equation}
Thus the Mittag-Leffler function is a stretched exponential with heavy tails. Furthermore, we have that (see \cite[formula 2.2.53]{KST06})
\begin{equation}
\mathbb{D}_{z}^{\beta}\,  E_{\beta}(\mu z^{\beta}) = \frac{z^{- \beta}}{\Gamma(1 -\beta)} + \mu E_{\beta}(\mu z^\beta), \quad \mu \in \mathbb{C}. \label{DfracE2}
\end{equation}


Using Equation \eqref{captuto-R-L-derivatives} formula \eqref{DfracE2} takes the form
\begin{equation}
{D^{\beta}_{z}}\,  E_{\beta}(\mu z^{\beta}) = \mu E_{\beta}(\mu z^\beta), \quad \mu \in \mathbb{C}, \quad \beta \in (0,1). \label{eigenDcaputo}
\end{equation}
Hence in this case we say that  $E_{\beta}(\mu z^{\beta})$ is the eigenfunction of the  Dzhrbashyan-Caputo derivative operator ${D^{\beta}_{z}}$ with   the corresponding eigenvalue $\mu\in \mathbb{C}$.

\section{Space-time fractional  Cauchy problems}
In this section we study time fractional and space fractional Cauchy problems.

\subsection{Time Fractional Cauchy problems in compact manifolds with boundary}

\begin{rk}
\label{strong-caputo-derivative}

We say that  $\Delta u$ exists in the strong sense if it exists pointwise and is continuous in $\mathcal{M}$.

Similarly, we say that $D^\beta_t f(t)$ exists in the strong sense if it exists pointwise and is continuous for  $t\in [0,\infty)$. One sufficient condition for this is the fact $f$ is a $C^1$ function on $[0, \infty)$ with
 $|f'(t)| \leq c \, t^{\gamma -1}$ for some $\gamma >0$. Then by \eqref{capFracDer}, the Caputo fractional derivative $D^\beta_t f(t)$ of $f$ exists  for every $t>0$ and the derivative is continuous in $t>0$.
\end{rk}

Let $\beta\in (0,1)$, $\mathcal{M}_\infty=(0,\infty )\times \mathcal{M}$  and define

\begin{eqnarray}
\mathcal{H}_\Delta(\mathcal{M}_\infty)&\equiv & \left\{u:\mathcal{M}_\infty\to \rr :\ \
\frac{\partial}{\partial t}u, \frac{\partial^\beta}{\partial t^\beta}u, \Delta u\in C(\mathcal{M}_\infty),\right.\nonumber\\
& &\left. \left|\frac{\partial}{\partial t}u(t,x)\right|\leq g(x)t^{\beta -1}, g\in L^\infty(\mathcal{M}), \ t>0 \right\}.\nonumber
\end{eqnarray}


We will write $u\in C^k(\bar{\mathcal{M}})$ to mean that for each fixed $t>0$, $u(t,\cdot)\in C^k(\bar{ \mathcal{M}})$, and
 $u\in C_b^k(\bar{ \mathcal{M}_\infty})$ to mean that $u\in C^k(\bar{\mathcal{M}_\infty})$ and is bounded.

 \begin{tm}\label{frac-laplace-pde}

 Given a  connected manifold $\mathcal{M}$  with  piecewise $C^\infty$ boundary and  compact closure.
   Let $P_t$ be the semigroup of Brownian motion  $\{B^m_t\}$ in $\mathcal{M}$ killed on the boundary $\partial \mathcal{M}$. Let $E_t=E^\beta_t$ be the process inverse  to a stable subordinator of index $\beta\in (0,1)$ independent of $\{B^m_t\}$.   Let $$f \in Dom ( \Delta )\cap C^1(\bar{\mathcal{M}})\cap C^2(\mathcal{M})$$ for which the eigenfunction expansion of $\Delta f$ with respect to the complete orthonormal basis $\{\varphi_n:\ n\in \N \}$ converges uniformly and absolutely.
Then the unique strong solution of
\begin{eqnarray}
 u & \in &
\mathcal{H}_\Delta(\mathcal{M}_\infty)\cap C_b(\bar {\mathcal{M}_\infty}) \cap C^1(\bar {\mathcal{M}}) \nonumber\\
\partial_t^\beta u(m,t) &=&
\Delta u(m,t);  \  \ m\in \mathcal{M}, \ t>0\label{frac-derivative-compact-manifold}\\
u(m, t)&=&0, \ m\in \partial \mathcal{M}, \ t>0, \nonumber\\
u(m,0)& =& f(m), \ m\in \mathcal{M}.\nonumber
\end{eqnarray}
is given by
\begin{eqnarray}
u(m,t)&=&\E[f(B^m_{E_t})I(E_t<\tau(B^m) )]\nonumber\\
&=& \frac{t}{\beta}\int_{0}^{\infty}P_l f(x)g_{\beta}
(tl^{-1/\beta })l^{-1/\beta -1}dl= \int_{0}^{\infty}P_{(t/l)^\beta}f(x)g_{\beta}
(l)dl. \nonumber\\
&=& \sum_{j=1}^\infty E_\beta(-t^\beta \lambda_j) \varphi_j(x) \int_\mathcal{M} \varphi_j(y)f(y)\mu(dy).\nonumber
\end{eqnarray}
Here $g_\beta$ is the density of $S^\beta(1)$.
\end{tm}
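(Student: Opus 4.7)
The plan is to use separation of variables in the Dirichlet eigenbasis $\{\varphi_j\}$ to produce a candidate series solution, identify it with both the subordination integral and the stochastic representation, and finally verify regularity and uniqueness. Write $f=\sum_j\kappa_j\varphi_j$ with $\kappa_j=\int_\mathcal{M}f\varphi_j\,d\mu$, and look for $u(m,t)=\sum_j T_j(t)\varphi_j(m)$. Formally substituting into \eqref{frac-derivative-compact-manifold} and using $\Delta\varphi_j=-\lambda_j\varphi_j$ gives the scalar fractional ODE
\begin{equation*}
D^\beta_t T_j(t) = -\lambda_j T_j(t),\qquad T_j(0)=\kappa_j,
\end{equation*}
whose unique solution is $T_j(t)=\kappa_j E_\beta(-\lambda_j t^\beta)$ by the eigenfunction identity \eqref{eigenDcaputo}. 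This produces the candidate
\begin{equation*}
u(m,t)=\sum_{j=1}^\infty \kappa_j E_\beta(-\lambda_j t^\beta)\varphi_j(m),
\end{equation*}
which is the third expression in the statement.

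The next step is to match the three representations. Using the closed-form Laplace transform \eqref{lapL} together with the spectral expansion of $P_s f$ from the Dirichlet heat kernel, conditioning on $E_t$ gives
\begin{equation*}
\E\bigl[f(B^m_{E_t})\,I(E_t<\tau(B^m))\bigr]=\int_0^\infty P_l f(m)\,\P(E_t\in dl)=\sum_{j=1}^\infty\kappa_j\varphi_j(m)\int_0^\infty e^{-\lambda_j l}\,\P(E_t\in dl),
\end{equation*}
and the inner integral equals $E_\beta(-\lambda_j t^\beta)$. The self-similarity relation $\P(E_t\in dl)=(t/\beta)g_\beta(t l^{-1/\beta})l^{-1/\beta-1}\,dl$, a direct consequence of the scaling of $S^\beta$, converts this to the two equivalent integral expressions involving $g_\beta$. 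Fubini and the uniform/absolute convergence of the spectral expansion (justified below) make each interchange legal.

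The main obstacle will be verifying that this series lies in $\mathcal{H}_\Delta(\mathcal{M}_\infty)\cap C_b(\overline{\mathcal{M}_\infty})\cap C^1(\overline{\mathcal{M}})$ with the prescribed derivative bound. The hypothesis that $\Delta f=-\sum_j\lambda_j\kappa_j\varphi_j$ converges absolutely and uniformly on $\overline{\mathcal{M}}$ is the crucial input: combined with the bound $|E_\beta(-\lambda_j t^\beta)|\le 1$ from \eqref{Ebound}, it gives absolute uniform convergence of $\sum_j\kappa_j E_\beta(-\lambda_j t^\beta)\Delta\varphi_j$, hence $\Delta u\in C(\mathcal{M}_\infty)$ and the series for $\Delta u$ equals $D^\beta_t u$ termwise. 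For the $t$-derivative one differentiates the Mittag--Leffler factor and uses the identity $\frac{d}{dt}E_\beta(-\lambda t^\beta)=-\lambda t^{\beta-1}E_{\beta,\beta}(-\lambda t^\beta)/1$, together with the asymptotics \eqref{mittag-leffler-large-asymptotics}; this yields $|\partial_t u(m,t)|\le g(m)\,t^{\beta-1}$ with $g\in L^\infty(\mathcal{M})$ after summing in $j$ using the growth \eqref{eigenvalue-asymptotics} of $\lambda_j$ and the absolute convergence hypothesis on $\Delta f$. The Dirichlet boundary condition is inherited from each $\varphi_j$, and the initial condition is recovered by dominated convergence as $t\downarrow 0$ since $E_\beta(0)=1$.

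Finally, for uniqueness suppose $v\in\mathcal{H}_\Delta(\mathcal{M}_\infty)\cap C_b(\overline{\mathcal{M}_\infty})\cap C^1(\overline{\mathcal{M}})$ solves the same problem. Set $c_j(t)=\int_\mathcal{M}v(m,t)\varphi_j(m)\,d\mu(m)$. The derivative bound in the definition of $\mathcal{H}_\Delta$ permits differentiation under the integral and justifies Remark \ref{strong-caputo-derivative}, so $D^\beta_t c_j(t)$ exists and, via Green's formula with Dirichlet data, equals $-\lambda_j c_j(t)$ with $c_j(0)=\kappa_j$. By the uniqueness of the scalar Mittag--Leffler ODE (take Laplace transforms using \eqref{CaputolT}), $c_j(t)=\kappa_j E_\beta(-\lambda_j t^\beta)$. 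Since $\{\varphi_j\}$ is a complete orthonormal basis of $L^2(\mathcal{M})$, $v$ coincides with the series above, proving uniqueness.
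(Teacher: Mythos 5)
Your proposal is correct and follows essentially the same route the paper takes: the paper's proof is a one-line deferral to Theorem 3.1 of \cite{mnv-09}, whose argument is exactly your separation of variables in the Dirichlet eigenbasis, the Mittag--Leffler eigenfunction identity \eqref{eigenDcaputo}, conditioning on the independent inverse subordinator to match the stochastic and subordination-integral forms, and uniqueness via projection of any solution onto the $\varphi_j$ (the same scheme the authors write out in detail for the boundaryless case in Theorem \ref{Theorem-closed-fractional}). Your regularity verification via the uniform absolute convergence of the expansion of $\Delta f$, the bound $|E_\beta(-\lambda_j t^\beta)|\le 1$, and the Kr\"ageloh-type derivative estimate is likewise the mechanism used there.
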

\begin{proof}
The proof follows by  similar arguments as in the proof of Theorem 3.1 in \cite{mnv-09}.
\end{proof}

\begin{rk}
Let $f\in C^{2k}_c(\mathcal{M})$ be a $2k$-times continuously differentiable function of
compact support in $\mathcal{M}$.  If $k>1+3n/4$, then the Equation
(\ref{frac-derivative-compact-manifold}) has a classical (strong) solution. In
particular, if $f\in C^{\infty}_c(\mathcal{M})$, then the solution of
Equation (\ref{frac-derivative-compact-manifold}) is in $C^\infty(\mathcal{M}).$ The results in this remark can be seen in connection with Corollary 3.4 in \cite{mnv-09} and  the bounds on the eigenfunctions and on the gradients of the eigenfunctions:  let $n$ denote the dimension of $\mathcal{M}$, for some uniform constants $c_1,c_2>0$
\begin{equation}
\begin{split}
||\varphi_j||_\infty & \leq c_1\lambda_j^{(n-1)/4}||\varphi_j||_2= c_1\lambda_j^{(n-1)/4}\\
||\nabla \varphi_j||_\infty&  \leq c_2\lambda_j^{(n+1)/4}||\varphi_j||_2= c_2\lambda_j^{(n+1)/4},\quad \lambda_j\geq 1
\end{split}
\end{equation}
see  \cite{Donnelly-06} and \cite{shi-xu}, and the references therein. Intrinsic ultracontractivity of the semigroup $P_t$ proved in Kumura \cite{kumura-03} that bounds each eigenfunction with the first eigenfunction can also be used in the proof of this theorem.

\end{rk}

\subsection{Time Fractional Cauchy  problems in compact manifolds without boundary}

Let
\begin{equation}
H^{s}(\mathcal{M}) = \left\lbrace f \in L^2(\mathcal{M}):\,  \sum_{l=0}^{\infty} (\lambda_l)^{2s} \left( \int_\mathcal{M} \phi_l(y) \, f(y) \mu(dy) \right)^2 < \infty  \right\rbrace. \label{SobolevS}
\end{equation}
\begin{tm}\label{Theorem-closed-fractional}
Let  $\beta\in (0,1)$ and $s>(3+3n)/4$.
Let $\mathcal{M}$ be a connected and  compact manifold (without boundary!).
The unique strong solution to the fractional Cauchy problem
\begin{equation}
\left\lbrace \begin{array}{ll}
\partial^\beta_t u(m,t) = \triangle u(m,t), \quad m\in \mathcal{M}, t>0\\
u(m,0)=f(m),\quad  m\in \mathcal{M}, \quad f \in H^{s}(\mathcal{M})
\end{array} \right .
\label{pdeFracmanifold}
\end{equation}
is given by
\begin{equation}
u(m,t)=\mathbb{E} f(B^m_{E_t}) = \sum_{j =1}^\infty E_\beta(-t^\beta \lambda_j) \phi_j(m) \int_\mathcal{M} {\phi_j(y)f(y) \mu(dy)} \label{sol-closed-fractional}
\end{equation}
where $B^m_t$ is a Brownian motion in $\mathcal{M}$ and $E_t=E^\beta_t$ is inverse to a stable subordinator with index $0<\beta<1$.
\end{tm}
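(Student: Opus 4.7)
My plan is to mirror the proof of Theorem \ref{frac-laplace-pde}, replacing Dirichlet eigenfunctions $\varphi_j$ by closed-eigenvalue eigenfunctions $\phi_j$ and the killed semigroup by the heat semigroup on the compact boundaryless manifold $\mathcal{M}$. I would first \emph{define} a candidate solution by the series
\begin{equation*}
u(m,t) := \sum_{j=1}^{\infty} E_\beta(-t^\beta \lambda_j)\, \kappa_j\, \phi_j(m), \qquad \kappa_j = \int_\mathcal{M} f(y)\phi_j(y)\,\mu(dy),
\end{equation*}
and verify (i) uniform and absolute convergence, (ii) that the series can be differentiated termwise in $t$ (both classical and fractional) and in space (via $\triangle\phi_j = -\lambda_j \phi_j$), and (iii) that the required boundedness and pointwise bounds in the class $\mathcal{H}_\triangle(\mathcal{M}_\infty) \cap C_b(\bar{\mathcal{M}_\infty})$ hold.

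The estimates all reduce to combining three inputs assumed or stated earlier in the paper: the Weyl asymptotics $\lambda_j \sim c\, j^{2/n}$ from \eqref{eigenvalue-asymptotics}; the $L^\infty$ eigenfunction bound $\|\phi_j\|_\infty \leq c_1 \lambda_j^{(n-1)/4}$ quoted in the remark following Theorem \ref{frac-laplace-pde}; and the bound $0\leq E_\beta(-t^\beta \lambda_j)\leq 1$ from \eqref{Ebound}. Since $f\in H^s(\mathcal{M})$ with $s>(3+3n)/4$, Cauchy--Schwarz gives $|\kappa_j|\leq \lambda_j^{-s}\bigl(\sum \lambda_j^{2s}\kappa_j^2\bigr)^{1/2}$, so the series for $u$, $\triangle u$ and $\partial_t^\beta u$ are majorized by $\sum_j \lambda_j\, \lambda_j^{(n-1)/4}\, \lambda_j^{-s}$, which converges precisely because $s>(3+3n)/4$ (this is where the numerology in the hypothesis is used). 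Using \eqref{eigenDcaputo}, termwise application of $D^\beta_t$ yields $\partial_t^\beta u = \triangle u$, and $E_\beta(0)=1$ gives $u(m,0)=f(m)$ in the strong sense since the series for $f$ converges uniformly. The pointwise bound $|\partial_t u(m,t)|\leq g(m)t^{\beta-1}$ required by $\mathcal{H}_\triangle(\mathcal{M}_\infty)$ follows from the small-time asymptotics $E_\beta(-t^\beta \lambda_j) \sim 1 - \lambda_j t^\beta/\Gamma(\beta+1)$ in \eqref{mittag-leffler-function} and majorized term-by-term differentiation in the spirit of Remark \ref{strong-caputo-derivative}.

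For the probabilistic identification $u(m,t) = \mathbb{E}f(B^m_{E_t})$, I would use the conditioning argument $\mathbb{E}f(B^m_{E_t}) = \int_0^\infty P_l f(m)\, \P(E_t\in dl)$, expand $P_l f(m) = \sum_j e^{-l\lambda_j}\kappa_j\phi_j(m)$ via Proposition~2 (series \eqref{law-B-on-M}) and exchange sum and integral (justified by absolute convergence, using \eqref{Ebound} again), whereupon the Laplace transform identity \eqref{lapL} collapses each term to $E_\beta(-t^\beta\lambda_j)\kappa_j\phi_j(m)$. Uniqueness is the cleanest part: any strong solution $v$ in $\mathcal{H}_\triangle(\mathcal{M}_\infty)\cap C_b(\bar{\mathcal{M}_\infty})$ has Fourier coefficients $v_j(t):=\int v(\cdot,t)\phi_j\,d\mu$ satisfying the scalar Caputo ODE $D^\beta_t v_j = -\lambda_j v_j$, $v_j(0)=\kappa_j$, whose unique solution is $\kappa_j E_\beta(-t^\beta\lambda_j)$ by \eqref{eigenDcaputo}, and completeness of $\{\phi_j\}$ in $L^2(\mathcal{M})$ (Theorem 1) then forces $v=u$. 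The main obstacle, and the only step that needs real care rather than quotation, is checking that the required interchange of $D^\beta_t$ with the infinite sum is legitimate \emph{pointwise}, not just in $L^2$; this is precisely what the threshold $s>(3+3n)/4$ buys, so the proof has to track the Weyl exponent carefully when majorizing the differentiated series.
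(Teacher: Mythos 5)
Your proposal is correct and follows essentially the same route as the paper's proof: the paper motivates the series \eqref{formal-sol-L-1} by separation of variables and then verifies convergence and termwise differentiability using exactly the ingredients you list (the bound $|\kappa_l|\leq C\lambda_l^{-s}$ from $f\in H^s(\mathcal{M})$, the eigenfunction sup-norm and gradient bounds, the Weyl asymptotics \eqref{eigenvalue-asymptotics}, the Mittag--Leffler eigenfunction identity \eqref{eigenDcaputo}, and the Kr\"ageloh derivative bound giving $|\partial_t u|\lesssim t^{\beta-1}$), deferring the probabilistic identification and uniqueness to the argument of Theorem 3.1 in \cite{mnv-09}, which is precisely the conditioning-on-$E_t$ and Fourier-coefficient argument you sketch. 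The only cosmetic difference is that you cite the small-$t$ expansion of $E_\beta$ where the paper uses the uniform-in-$t$ bound $|\frac{d}{dt}E_\beta(-\lambda t^\beta)|\leq c\lambda t^{\beta-1}$ from \cite{krageloh}, which is the bound actually needed for membership in $\mathcal{H}_\Delta(\mathcal{M}_\infty)$.
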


\begin{proof}
 The proof follows the main steps in the proof of Theorem 3.1 in \cite{mnv-09}.
The proof is based on the method of separation of variables.  Let $u(m,t)=G(t)F(m)$ be a solution of
(\ref{pdeFracmanifold}).
 Then substituting into  (\ref{pdeFracmanifold}), we get
$$
F(m)D_t^\beta G(t)
= G(t)\triangle  F(m).
$$
Divide both sides by $G(t)F(m)$ to obtain
$$
\frac{D^\beta_t G(t)}{G(t)} = \frac{\triangle  F(m)}{F(m)}= -c.
$$
Then we have
\begin{equation}\label{time-pde}
D^\beta_t G(t) = -c \, G(t), \ t>0
\end{equation}
and
\begin{equation}\label{space-pde}
\triangle F(m)=-c \, F(m), \ m\in \mathcal{M}.
\end{equation}
By the discussion above, the eigenvalue problem (\ref{space-pde}) is solved by an infinite
sequence of pairs $\{\lambda_{j}, \phi_j:\, j\in\mathbb{N} \}$ where $\phi_j$ forms a complete orthonormal set in $L^2(\mathcal{M})$.
In particular, the initial function $f$ regarded as an element of $L^2(\mathcal{M})$ can be represented as
\begin{equation}
f(m)=\sum_{l=1}^\infty \kappa_l\phi_l(m).
\end{equation}
where $\kappa_{l}=\int_{\mathcal{M}}f(m) \phi_l(m)\mu(dm)$.
By equation \eqref{eigenDcaputo} we  see that $\kappa_lE_\beta(-\lambda_l(t)^\beta)$
solves \eqref{time-pde}.  Sum these solutions $ \kappa_lE_\beta(-\lambda_l(t)^\beta) \phi_l(m)$ to (\ref{pdeFracmanifold}), to get
\begin{equation}\label{formal-sol-L-1}
u(t,m)=\sum_{l=1}^\infty
\kappa_lE_\beta(-\lambda_l(t)^\beta) \phi_l(m).
\end{equation}
Since $f\in H^{s}(\mathcal{M}) $ we get
$$
|\kappa_l|\leq C\lambda_l^{-s}
$$
for some $C>0$.
The fact that  the series \eqref{formal-sol-L-1} converges absolutely and uniformly follows from
$$
||\phi_l||_\infty\leq C{\lambda_l}^{(n-1)/4}
$$
and the asymptotics in \eqref{eigenvalue-asymptotics}:
$$||u||_\infty\leq \sum_{l=1}^\infty
|\kappa_l| E_\beta(-\lambda_l(t)^\beta) ||\phi_l||_\infty\leq \sum_{l=1}^\infty \lambda_l^{-s}{\lambda_l}^{(n-1)/4}<\infty $$
as  $s>1/4+3n/4$ and  $\lambda_l\sim C_n l^{2/n}$ as $l\to\infty$.

To show that $u\in C^1(\mathcal{M})$ we use the following from Theorem 1 in Shi and Xu \cite{shi-xu}. : For all $\lambda_l\geq 1$
$$
||\nabla \phi_l||_\infty\leq C\sqrt{\lambda_l}||\phi_l||_\infty
$$
where $C$ is constant depending only on $\mathcal{M}$.
This gives
$$
||\nabla u||_\infty\leq \sum_{l=0}^\infty
|\kappa_l| ||\nabla \phi_l||_\infty\leq  \sum_{l=1}^\infty \lambda_l^{-s}{\lambda_l}^{n/2}<\infty
$$
 Since $s>1/4+3n/4$. Hence $u\in C^1(\mathcal{M})$.

 Similarly
 $$
||\Delta  u||_\infty\leq \sum_{l=1}^\infty
|\kappa_l| ||\Delta \phi_l||_\infty\leq  \sum_{l=1}^\infty \lambda_l^{-s}||{\lambda_l}\phi_l||_\infty<\infty
$$
as $s>3/4+3n/4$.
 We next show that $\partial^\beta_t u$ exists pointwise as a continuous function.
Using \cite[Equation (17)]{krageloh}
$$
\left| \frac{ d E_\beta(-\lambda
t^\beta)}{dt}\right|\leq c\frac{\lambda t^{\beta-1}}{1+\lambda t^\beta}\leq c\lambda t^{\beta -1},
$$
 we get
 $$
||\partial_t  u||_\infty\leq \sum_{l=1}^\infty
|\kappa_l| || \partial_t E_\beta(-\lambda_l(t)^\beta)||\phi_l||_\infty\leq ct^{\beta -1} \sum_{l=1}^\infty \lambda_l^{-s}|{\lambda_l}| ||\phi_l||_\infty<\infty
$$
 as $s>3/4+3n/4$.  It follows from Remark \ref{strong-caputo-derivative}, $\partial_t^\beta u$  exists  pointwise as a continuous function and  is defined as a classical function.
 Hence we can apply the Laplacian and Caputo fractional derivative $\partial^\beta_t$ term by term to \eqref{formal-sol-L-1} to show
$$
(\partial^\beta_t-\Delta )u(t,m)=\sum_{l=1}^\infty \kappa_l\bigg[
  \partial^\beta_tE_\beta(-\lambda_l(t)^\beta) \phi_l(m)-E_\beta(-\lambda_l(t)^\beta)\Delta \phi_l(m)\bigg]=0
$$

The rest of the proof follows similar to the proof of Theorem 3.1 in \cite{mnv-09}.

\end{proof}

\subsection{Space fractional operators}
Let $P_s=e^{s\triangle}$ be the semigroup associated with the Laplace operator  $\triangle$ on the manifold $\mathcal{M}$, and let   $\nu(\cdot)$ be the L\'{e}vy  measure of subordinator $S_t=S^\Psi_t$  such that $\nu(-\infty, 0)=0$, $\int_0^\infty (s \wedge 1)\nu(ds)< \infty$ where $a \wedge b = \min \{a, b\}$.  Recall that
\begin{equation}
\Psi(\xi) = \int_0^\infty \left( 1- e^{-s\xi}\right) \nu(ds) \label{symb-subordinator}
\end{equation}
is the so-called Laplace exponent of the corresponding subordinator  $S_t=S^\Psi_t$, $t\geq 0$ with Laplace transform \eqref{gen-lap-transform}.
The standard way to define fractional differential operators is as in equation \eqref{D-op-one} (see Schilling et al. \cite{book-song-et-al}).

Formula \eqref{D-op-one} is a generalized version of the representation
\begin{equation}
-(-\triangle_{\mathcal{M}})^\alpha f(x) = \int_0^\infty \left( P_sf(x) - f(x) \right) \nu(ds)
\end{equation}
 where in this case $\nu(ds)=\alpha s^{-\alpha -1}/\Gamma(1-\alpha)ds$ is the (density of the) L\'{e}vy measure of a stable subordinator.

\begin{rk}
After some calculation, the operator \eqref{D-op-one} can be formally rewritten as
\begin{equation}
\mathbb{D}^{\Psi}_{\mathcal{M}} f(x) = - \int_{\mathcal{M}} f(y)  J(x,y) \mu(dy) \label{fract-op-Psi-1}
\end{equation}
where
\begin{align*}
J(x,y) = \sum_{j \in \mathbf{N}} \Psi(\lambda_j) \, \phi_j(x) \, \phi_j(y).
\end{align*}
%
%
%
%

In order to arrive at \eqref{fract-op-Psi-1}  we observe that, for a suitable function  $f$ on $\mathcal{M}$ for which a series representation by means of the orthonormal system $\{\phi_j \}_{j \in \mathbf{N}}$ holds true, we have that
\begin{equation}
P_sf(x) = \sum_{j \in \mathbf{N}} e^{-s \lambda_j } \phi_j(x) \, f_j \label{semi-group-P}
\end{equation}
is the transition semigroup of a Brownian motion $B_t$, $t\geq 0$, on the manifold $\mathcal{M}$. We recall that $f_j = \int_\mathcal{M} f(x) \phi_j(x)\mu(dx)$. Therefore, the semigroup $u(x,t)=P_tf(x)$ solves the heat equation \eqref{heatM}. From \eqref{semi-group-P}, the operator \eqref{D-op-one} takes the form
\begin{align*}
\mathbb{D}^{\Psi}_\mathcal{M} f(x) = & \int_0^\infty \left( P_s f(x) - P_0f(x) \right) \nu(ds)\\
= & \sum_{j \in \mathbf{N}} f_j \phi_j(x) \int_0^\infty \left( e^{-s \lambda_j} - 1 \right)\nu(ds)\\
= & -\sum_{j \in \mathbf{N}} f_j \phi_j(x) \Psi(\lambda_j)
\end{align*}
and therefore, we formally get that
\begin{align*}
\mathbb{D}^{\Psi}_\mathcal{M} f(x) = & -\sum_{j \in \mathbf{N}} f_j \phi_j(x) \Psi(\lambda_j)\\
= & -\sum_{j \in \mathbf{N}} \left( \int_\mathcal{M} f(y) \phi_j(y) \mu(dy) \right) \phi_j(x) \Psi(\lambda_j)\\
= & -\int_\mathcal{M} f(y) \left( \sum_{j \in \mathbf{N}}  \Psi(\lambda_j) \phi_j(x) \phi_j(y) \right)  \mu(dy)\\
= & -\int_\mathcal{M} f(y) J(x,y) \mu(dy).
\end{align*}

\end{rk}
%
%
%

 We next discuss the Cauchy problems for the space fractional operators.
Recall that L\'{e}vy and Khintchine  showed that a L\'{e}vy measure $\nu$ on $\mathbb{R} - \{0\}$ can be $\sigma$-finite provided  that (\cite{appl2009})
\begin{equation*}
\int (|y|^2 \wedge 1) \nu(dy) < \infty.
\end{equation*}
Since, $(|y|^2 \wedge \epsilon) \leq (|y|^2 \wedge 1)$ whenever $\epsilon \in (0,1]$, it follows that
\begin{equation*}
\nu((-\epsilon, \epsilon)^c) < \infty, \quad \textrm{ for all }\quad 0 < \epsilon \leq 1
\end{equation*}
(see, for example, \cite{appl2009}). Furthermore, we recall that (\cite{Hoh, HohJacob})
\begin{equation}
|\Psi(\xi)| \leq c_\Psi\, (1+ |\xi|^2)
\end{equation}
where $c_\Psi = 2 \sup_{|\xi| \leq 1} |\Psi(\xi)|$, that is $\Psi \hat{f} \in L^2$ where $\hat{f}$ is the Fourier transform of $f$. For subordinators, similar calculation leads to
\begin{equation}
\epsilon^{-2} \int_0^\infty (y \wedge \epsilon )\nu(dy) = \int_0^\infty (y \wedge 1) \nu(dy) < \infty
\end{equation}
only if $\epsilon >0$ and $\epsilon \leq 1$. Therefore, we can write $\int_\epsilon^\infty \nu(dy) < \infty$ for $\epsilon >0$. Furthermore, for a subordinator $S_t$ with symbol $\Psi$ we also have that
\begin{equation}
\lim_{\xi \to \infty} \frac{\Psi(\xi)}{\xi} = 0 \label{zero-drift}
\end{equation}
that is, $S_t$ has zero drift and, $|\Psi(\xi)| < \xi$.

\begin{defin}
Let $\Psi$ be the symbol of a subordinator with no drift. Let $f \in H^s(\mathcal{M})$ and $s> (3n+3)/4$. Then,
\begin{equation}
\mathbb{D}^\Psi_\mathcal{M} f(m) =   -\sum_{j \in \mathbf{N}} f_j \phi_j(m) \Psi(\lambda_j). \label{def-der-gen-Psi}
\end{equation}
is absolutely and uniformly convergent. Furthermore,
\begin{align*}
\mathbb{D}^\Psi_\mathcal{M} f(m) = & -\int_\mathcal{M} f(y) J(m,y) \mu(dy)
\end{align*}
when the integral exists.
\end{defin}

 The definition above and therefore the convergence of \eqref{def-der-gen-Psi} immediately follows from \eqref{zero-drift} and the fact that $\|\phi_j \|_\infty \leq C \lambda_j^{(n-1)/4}$ for some $C>0$ with $\lambda_j \sim j^{2/n}$ as $j \to \infty$. 

%
%

\begin{rk}
Let us consider the kernel of the subordinate Brownian motion $B^x(S(t))$
\begin{equation}
q(x,y,t) = \int_0^\infty p(x,y,s) \, \P\{ S_t \in ds \} = \sum_{j \in \mathbf{N}} e^{-t \Psi(\lambda_j)} \phi_j(x)\phi_j(y)
\end{equation}
where $p(x,y,s)$ is the kernel \eqref{kernel-closed-e} which is the transition density of Brownian motion and $S$ is a subordinator with
\begin{equation}
-\partial_t\, \mathbb{E} e^{-\xi S_t} \Big|_{t=0^+} = \Psi(\xi).
\end{equation}
We observe that
\begin{equation}
- \partial_t\, q(x,y,t) \Big|_{t=0^+} = J(x,y).
\end{equation}

In this case the subordinate semigroup is given by
\begin{equation}\label{subordinate-semigroup}
P^\Psi_t f(x) = \int_0^\infty P_s f(x) \, \P\{ S_t \in ds \} = \sum_{j \in \mathbf{N}} e^{-t \Psi(\lambda_j)}<f, \phi_j>_\mu \phi_j(x)
\end{equation}
where $P_s$ is the transition semigroup of Brownian motion given in equation \eqref{semi-group-P}
\end{rk}
\begin{tm}
The solution to equation \eqref{space-frac-cauchy-problem} can be written as $u(x,t)=\mathbb{E}f(B^x_{S_t})=P^\Psi_t f(x)$ where $B^x_{S_t}$, $t\geq 0$ is a subordinate Brownian motion on $\mathcal{M}$ and $S_t=S^\Psi_t$ is a subordinator with Laplace exponent \eqref{symb-subordinator}.
\end{tm}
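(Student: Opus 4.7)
The plan is to verify directly that the spectral series
$P^\Psi_t f(x)=\sum_{j\in\mathbf{N}} e^{-t\Psi(\lambda_j)}f_j\phi_j(x)$
from the representation \eqref{subordinate-semigroup}, with $f_j=\int_\mathcal{M} f(y)\phi_j(y)\mu(dy)$, satisfies the Cauchy problem \eqref{space-frac-cauchy-problem}, and then to identify this series with the probabilistic expression $\mathbb{E}f(B^x_{S_t})$ by conditioning on the subordinator.

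First I would establish absolute and uniform convergence of the three relevant series: the one for $u(x,t)=P^\Psi_t f(x)$, the one for $\partial_t u(x,t)$, and the one for $\mathbb{D}^\Psi_\mathcal{M} u(\cdot,t)$. Under $f\in H^s(\mathcal{M})$ with $s>(3n+3)/4$ we have $|f_j|\leq C\lambda_j^{-s}$, and the excerpt records $\|\phi_j\|_\infty\leq C\lambda_j^{(n-1)/4}$ together with $\lambda_j\sim C_n j^{2/n}$. Since $\Psi$ is the Laplace exponent of a subordinator with zero drift, it is nonnegative and $0\le \Psi(\lambda_j)\le c_\Psi(1+\lambda_j^2)$ by the bound recorded in the excerpt. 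This Sobolev exponent is calibrated precisely so that
\begin{equation*}
\sum_{j} \lambda_j^{2-s}\,\lambda_j^{(n-1)/4}<\infty,
\end{equation*}
which dominates all three series for every $t\ge 0$.

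Second, termwise differentiation in $t$ yields
\begin{equation*}
\partial_t u(x,t)=-\sum_{j}\Psi(\lambda_j)\,e^{-t\Psi(\lambda_j)}\,f_j\,\phi_j(x),
\end{equation*}
and applying the operator in \eqref{def-der-gen-Psi} to $u(\cdot,t)$ gives the same expression, since the $j$-th Fourier coefficient of $u(\cdot,t)$ is exactly $e^{-t\Psi(\lambda_j)}f_j$. Taking $t\downarrow 0$ and invoking completeness of $\{\phi_j\}$ in $L^2(\mathcal{M})$ recovers $u(x,0)=f(x)$, so the series is a strong solution of \eqref{space-frac-cauchy-problem}.

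Third, to identify the analytic solution with the probabilistic one, I would condition on the independent subordinator and use $\mathbb{E}e^{-\xi S_t}=e^{-t\Psi(\xi)}$:
\begin{equation*}
\mathbb{E}f(B^x_{S_t})=\int_0^\infty P_s f(x)\,\mathbb{P}\{S_t\in ds\}
=\sum_{j}\Bigl(\int_0^\infty e^{-s\lambda_j}\,\mathbb{P}\{S_t\in ds\}\Bigr)f_j\phi_j(x)
=\sum_{j} e^{-t\Psi(\lambda_j)}\,f_j\,\phi_j(x),
\end{equation*}
the interchange of sum and integral being justified by Fubini and the uniform estimates from the first step. The main obstacle is controlling the termwise action of $\mathbb{D}^\Psi_\mathcal{M}$ since $\Psi$ need not decay like a fractional power — the growth $\Psi(\lambda_j)\lesssim 1+\lambda_j^2$ is what forces the Sobolev exponent $s>(3n+3)/4$, matching the regularity used in Theorem \ref{Theorem-closed-fractional}. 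Uniqueness follows by expanding any strong solution in the $\{\phi_j\}$ basis, observing that each Fourier coefficient $u_j(t)=\langle u(\cdot,t),\phi_j\rangle_\mu$ satisfies the scalar ODE $u_j'(t)=-\Psi(\lambda_j)u_j(t)$ with $u_j(0)=f_j$, whose unique solution is $e^{-t\Psi(\lambda_j)}f_j$.
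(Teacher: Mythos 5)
Your overall architecture is sound and is genuinely different from what the paper does: the paper offers no computation at all for this theorem, simply identifying $P^\Psi_t$ as the Bochner subordinate of the heat semigroup (formula \eqref{subordinate-semigroup}) and appealing to the general theory of subordination of semigroups, under which the subordinate semigroup automatically has generator $\mathbb{D}^\Psi_\mathcal{M}$. You instead verify the spectral series termwise and then identify it with $\mathbb{E}f(B^x_{S_t})$ by conditioning on $S_t$ and using \eqref{gen-lap-transform}; this buys a self-contained, explicit argument (including a uniqueness statement the paper never addresses), at the cost of needing the Sobolev hypothesis $f\in H^s(\mathcal{M})$, which the theorem as stated does not impose but which the preceding Definition makes the natural reading.

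There is, however, one quantitative error in your convergence step. You control $\Psi(\lambda_j)$ by the general L\'evy-symbol bound $\Psi(\xi)\le c_\Psi(1+\xi^2)$ and then assert that $s>(3n+3)/4$ makes $\sum_j\lambda_j^{2-s}\lambda_j^{(n-1)/4}$ converge. With $\lambda_j\sim C_n j^{2/n}$ this series converges only when $2-s+(n-1)/4<-n/2$, i.e. $s>(3n+7)/4$, so the exponent is not ``calibrated precisely'' for the quadratic bound, and your closing sentence attributing the threshold $(3n+3)/4$ to the growth $\Psi(\lambda_j)\lesssim 1+\lambda_j^2$ is wrong. The fix is in the paper itself: for a driftless subordinator \eqref{zero-drift} gives $\Psi(\xi)/\xi\to 0$, indeed $\Psi(\xi)\le\xi$, so the dominating series is $\sum_j\lambda_j^{1-s}\lambda_j^{(n-1)/4}$, which converges exactly when $s>1+(n-1)/4+n/2=(3n+3)/4$. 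With that substitution the rest of your argument (termwise differentiation, termwise action of \eqref{def-der-gen-Psi}, Fubini in the subordination identity, and the scalar-ODE uniqueness argument) goes through.
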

This is the so called Bochner subordination of Brownian motion with a subordinator $S_t$ which implies that $B^x_{S_t}$ is also a L\'evy process on $\mathcal{M}$. See more on the Bochner subordination in \cite{book-song-et-al}.

\section{Random fields on $\mathcal{M}$}
Let us consider the Gaussian random field $T(m)$, $m \in \mathcal{M}$ where $\mathcal{M}$ is a compact manifold with the following properties:
\begin{itemize}
\item [A.1)] $T$ has almost surely continuous sample paths;
\item [A.2)] $T$ has zero mean, $\mathbb{E}T(m) = 0$;
\item [A.3)] $T$ has finite mean square integral,
\begin{equation}
\mathbb{E}\left[ \int_\mathcal{M} T^2(m)\mu(dm) \right]  < \infty;
\end{equation}
\item [A.4)] $T$ has continuous covariance function
\begin{equation}
\mathscr{K}(m_1,m_2) = \mathbb{E}T(m_1)T(m_2).
\end{equation}
\end{itemize}
It is well-known (see for example \cite{KacSie47}) that there exist constants $\zeta_1 \geq \zeta_2 \geq \cdots \geq 0$ and continuous functions $\{\psi_j\}_{j \in \mathbf{N}}$ on $\mathcal{M}$ such that the following properties are fulfilled:
\begin{itemize}
\item [B.1)] $\{\psi_j\}_{j \in \mathbf{N}}$ are orthonormal in $L^2(\mathcal{M}, \mu)$;
\item [B.2)] $\{\psi_j, \zeta_j\}_{j \in \mathbf{N}}$ form a complete set of solutions to the Fredholm-type equation
\begin{equation}
\int_\mathcal{M} \mathscr{K}(m_1, m_2)\psi_j(m_1)\mu(dm_1) = \zeta_j\, \psi_j(m_2), \; \forall\, j \in \mathbf{N}; \label{fredholm-eq}
\end{equation}
\item [B.3)] the following holds true
\begin{equation}
\mathscr{K}(m_1, m_2) = \sum_{j \in \mathbf{N}} \zeta_j \psi_j(m_1) \psi_j(m_2) \label{RKHS-K}
\end{equation}
and the series is absolutely and uniformly convergent on $\mathcal{M} \times \mathcal{M}$;
\item [B.4)] there exists a sequence $\{\omega_j \}_{j \in \mathbf{N}}$ of Gaussian random variables ($\omega_j \sim N(0,1), \, \forall j$) such that the following Karhunen-Loeve expansion holds
\begin{equation}
T(m) = \sum_{j \in \mathbf{N}} \sqrt{\zeta_j} \omega_j \psi_j(m) \label{Kar-loe}
\end{equation}
and the series converges in the integrated mean square sense on $\mathcal{M}$.
\end{itemize}
The reader can consult the book by Adler \cite{Adler90}.

%
%
%
%
%

We introduce the following spectral representation for the random field $T$.
\begin{tm}
Let $\{\phi_j\}_{j \in \mathbf{N}}$ and $\{\psi_j\}_{j \in \mathbf{N}}$ be the orthonormal systems previously specified. Let $T(m)$, $m \in \mathcal{M}$ be the random field for which A.1-A.4 are fulfilled.
\begin{enumerate}
\item The representation
\begin{equation}
T(m) = \sum_{j \in \mathbf{N}} \phi_j(m) \, c_{j}  \label{Trep}
\end{equation}
where
\begin{align*}
c_j= & \int_\mathcal{M} T(m) \phi_j(m)\mu(dm), \quad j \in \mathbf{N}
\end{align*}
holds in $L^2(dP \otimes d\mu)$ sense, i.e.
\begin{equation}
\lim_{N \to \infty } \mathbb{E}\left[ \int_\mathcal{M} \left( T(m) -  \sum_{j =1}^N \phi_j(m) \, c_{j} \right)^2 \mu(dm) \right] = 0.
\end{equation}
\item The Fourier random coefficients $\{c_j\}_{j \in \mathbf{N}}$ are Gaussian r.v.'s written as
\begin{align*}
c_j = & \sum_{i \in \mathbf{N}} \sqrt{\zeta_i} \omega_i \langle \phi_j, \psi_i \rangle_{\mu}, \quad j \in \mathbf{N}
\end{align*}
where $\omega_j \sim N(0,1)$, $\forall j$ and therefore
\begin{equation}
c_j \sim N\left(0, \sum_{i \in \mathbf{N}} \zeta_i |\langle \phi_j, \psi_i\rangle_{\mu}|^2 \right), \quad j \in \mathbf{N}.
\end{equation}
Furthermore,
\begin{equation}
\mathbb{E}[c_k\, c_s] = \sum_{i \in \mathbf{N}} \zeta_i\, \langle \phi_k, \psi_i\rangle_\mu \, \langle \phi_s, \psi_i \rangle_\mu .\label{cov-coeff-c}
\end{equation}
\end{enumerate}
\label{TheoExpansion}
\end{tm}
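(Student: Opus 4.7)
The plan is to treat the two parts of Theorem \ref{TheoExpansion} in sequence, using the fact that $\{\phi_j\}_{j\in \mathbf{N}}$ is a complete orthonormal basis of $L^2(\mathcal{M},\mu)$ (guaranteed by the eigenvalue theorem cited from Chavel) together with the Karhunen--Loève expansion B.4.

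For part (1), my starting point is the observation that, under A.1 and the compactness of $\mathcal{M}$, each trajectory $m \mapsto T(m,\omega)$ lies in $L^2(\mathcal{M},\mu)$ almost surely; by A.3 the map $\omega \mapsto \|T(\cdot,\omega)\|_\mu^2$ is integrable, so $T \in L^2(dP\otimes d\mu)$. For a fixed $\omega$, Parseval's identity in $L^2(\mathcal{M},\mu)$ applied to the basis $\{\phi_j\}$ gives
\begin{equation*}
\int_\mathcal{M} \Bigl(T(m,\omega) - \sum_{j=1}^{N}c_j(\omega)\phi_j(m)\Bigr)^2 \mu(dm)
= \int_\mathcal{M} T^2(m,\omega)\,\mu(dm) - \sum_{j=1}^{N} c_j(\omega)^2.
\end{equation*}
Taking expectations (Fubini is allowed by A.3) and letting $N\to\infty$, monotone convergence together with $\sum_j \mathbb{E}[c_j^2]=\mathbb{E}\|T\|_\mu^2<\infty$ yields the required $L^2(dP\otimes d\mu)$ convergence. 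The only slightly delicate point is to justify that $\mathbb{E}[c_j^2]=\int_\mathcal{M}\int_\mathcal{M}\mathscr{K}(m_1,m_2)\phi_j(m_1)\phi_j(m_2)\mu(dm_1)\mu(dm_2)$, which follows from A.4 and a further application of Fubini.

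For part (2), I substitute the Karhunen--Loève expansion \eqref{Kar-loe} into the definition of $c_j$. Since \eqref{Kar-loe} converges in the integrated mean square sense (B.4), the partial sums are Cauchy in $L^2(dP\otimes d\mu)$, which together with $\phi_j\in L^2(\mathcal{M},\mu)$ and Cauchy--Schwarz on $\mathcal{M}$ lets me exchange $\int_\mathcal{M}$ with $\sum_{i}$ to obtain
\begin{equation*}
c_j = \sum_{i\in\mathbf{N}} \sqrt{\zeta_i}\,\omega_i\,\langle \phi_j,\psi_i\rangle_\mu.
\end{equation*}
This is a convergent series (in $L^2(dP)$) of independent centered Gaussians, hence $c_j$ is itself centered Gaussian; its variance is the sum of the individual variances $\zeta_i|\langle\phi_j,\psi_i\rangle_\mu|^2$, which gives the stated law. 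Finally, using orthonormality of the $\omega_i$'s, namely $\mathbb{E}[\omega_i\omega_l]=\delta_{il}$, a term-by-term computation of
\begin{equation*}
\mathbb{E}[c_k c_s] = \sum_{i,l}\sqrt{\zeta_i\zeta_l}\,\langle\phi_k,\psi_i\rangle_\mu\langle\phi_s,\psi_l\rangle_\mu\,\mathbb{E}[\omega_i\omega_l]
\end{equation*}
collapses to \eqref{cov-coeff-c}.

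The main obstacle I anticipate is the justification of the interchange of summation and integration in part (2): one must guarantee that the $L^2(dP\otimes d\mu)$ convergence of \eqref{Kar-loe} transfers to convergence of $\int_\mathcal{M}(\cdot)\phi_j\,d\mu$ in $L^2(dP)$. This is handled by noting that the map $u\mapsto \langle u,\phi_j\rangle_\mu$ is a bounded linear functional on $L^2(\mathcal{M},\mu)$ with norm one, so its tensor extension to $L^2(dP\otimes d\mu)\to L^2(dP)$ is also bounded; applying it to the Karhunen--Loève partial sums preserves convergence. The Gaussianity of $c_j$ then follows because $L^2$ limits of centered Gaussian sequences are centered Gaussian.
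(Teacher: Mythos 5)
Your proposal is correct and follows essentially the same route as the paper: both parts rest on the completeness of $\{\phi_j\}$ in $L^2(\mathcal{M},\mu)$ and on substituting the Karhunen--Lo\`eve expansion into $c_j$ to read off Gaussianity and the covariance via $\mathbb{E}[\omega_i\omega_l]=\delta_{il}$. The only (harmless) difference is in part (1), where you apply Parseval pathwise and conclude by monotone convergence, whereas the paper reaches the same identity $\lim_N\sum_{j\le N}\mathbb{E}c_j^2=\sum_i\zeta_i=\mathbb{E}\|T\|_\mu^2$ by expanding $\mathscr{K}(m,m)$ and invoking completeness of $\{\phi_j\}$ through a delta-kernel computation.
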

\begin{proof}
We consider the orthonormal system $\{\phi_j \}_{j \in \mathbf{N}}$ on $L^2(\mathcal{M}, \mu)$ and the fact that the Karhunen-Loeve expansion
\begin{equation}
T(m) = \sum_{j \in \mathbf{N}} \sqrt{\zeta_j} \omega_j \psi_j(m) \label{K-L-exp}
\end{equation}
holds true since A.1-A.4 are fulfilled (the series converges in the integrated mean square sense on $\mathcal{M}$). In force of these facts we can write
\begin{equation}
\psi_j(x) = \sum_{i \in \mathbf{N}} \theta_{ij} \phi_i(x), \quad \textrm{where}\quad \theta_{ij}=\langle \psi_j, \phi_i \rangle_\mu \label{theta-ij}
\end{equation}
and $\{\psi_j\}_{j \in \mathbf{N}}$ is a set of continuous functions on $\mathcal{M}$ satisfying B.1-B.4.
Therefore, we obtain that
\begin{equation}
T(m) = \sum_{j,i \in \mathbf{N}}  \sqrt{\zeta_j}\, \omega_j \, \theta_{ij} \, \phi_i(m) = \sum_{i \in \mathbf{N}} \left( \sum_{j \in \mathbf{N}} \sqrt{\zeta_j}\, \omega_j \, \theta_{ij}  \right) \phi_i(m). \label{T-in-terms-of-psi}
\end{equation}
By comparing \eqref{T-in-terms-of-psi} with the \eqref{Trep}, we  can immediately see that
\begin{equation}
c_i = \sum_{j \in \mathbf{N}} \sqrt{\zeta_j}\, \omega_j \, \theta_{ij} \label{c-proof}
\end{equation}
term by term and $c_i$ is the Fourier random coefficient in the series expansion involving the orthonormal system $\{\phi_i\}_{i \in \mathbf{N}}$. On the other hand, from \eqref{K-L-exp}, we have that
\begin{equation}
c_j = \int_\mathcal{M} T(m)\phi_j(m)\mu(dm) = \sum_{i \in \mathbf{N}} \sqrt{\zeta_i} \, \omega_i \int_\mathcal{M} \psi_i(m) \, \phi_j(m)\, \mu(dm)
\end{equation}
which coincides with \eqref{c-proof}. We know that $\omega_j \sim N(0,1)$ and therefore,
\begin{equation}
c_i \sim N\left(0, \sum_{j \in \mathbf{N}} \zeta_j\, \theta^2_{ij} \right).
\end{equation}
From \eqref{RKHS-K} and \eqref{Kar-loe} we can immediately verify that $\omega_j$ for all $j \in \mathbf{N}$ are independent random variables, thus we write $\mathbb{E}[\omega_j\, \omega_i] = \delta_i^j$ which is the Kronecker's delta symbol
\begin{equation}
\delta_{i}^{j} = \left\lbrace \begin{array}{l}
1, \quad i=j,\\
0, \quad i \neq j.
\end{array} \right.
\end{equation}
Result \eqref{cov-coeff-c} comes from the fact that
\begin{align*}
\mathbb{E}[c_k\, c_s] = & \sum_{i,j \in \mathbf{N}} \sqrt{\zeta_i}\, \sqrt{\zeta_j}\,\mathbb{E}[\omega_j\, \omega_i]\, \langle \phi_k, \psi_i\rangle_\mu \, \langle \phi_s, \psi_j \rangle_\mu\\
= & \sum_{i,j \in \mathbf{N}} \sqrt{\zeta_i}\, \sqrt{\zeta_j}\,\delta_i^j\, \langle \phi_k, \psi_i\rangle_\mu \, \langle \phi_s, \psi_j \rangle_\mu
\end{align*}
and the claim appears.

From the completeness of the system $\{\phi_j \}_{j  \in \mathbf{N}}$ we also obtain that
\begin{align*}
\mathbb{E}\left[ \int_\mathcal{M} \left( T(m) -  \sum_{j =1}^N \phi_j(m) \, c_{j} \right)^2 \mu(dm) \right]
= & \mathbb{E}\left\| T(m) -  \sum_{j =1}^N \phi_j(m) \, c_{j} \right\|_\mu^2\\
= & \mathbb{E}\| T(m)\|^2_\mu - \sum_{j=0}^N \mathbb{E}c_j^2
\end{align*}
where
\begin{equation}
\mathbb{E}\| T(m)\|^2_\mu = \int_\mathcal{M} \mathbb{E}[T(m)]^2 \mu(dm)  = \sum_{j \in \mathbf{N}} \zeta_j
\end{equation}
and
\begin{equation}
\sum_{j=0}^N \mathbb{E}c_j^2 = \sum_{j=0}^N \sum_{i \in \mathbf{N}} \zeta_i \, |\langle \phi_j , \psi_i \rangle_\mu |^2. \label{c-c-proof}
\end{equation}
Indeed, we have that
\begin{equation}
\mathbb{E} [T(m)]^2= \mathscr{K}(m,m)
\end{equation}
and
\begin{equation}
\int_\mathcal{M} \mathscr{K}(m,m)\mu(dm) = \sum_{j \in \mathbf{N}} \zeta_j \int_\mathcal{M} |\psi_j(m)|^2\mu(dm) = \sum_{j \in \mathbf{N}} \zeta_j.
\end{equation}
Formula \eqref{c-c-proof} can be rewritten as
\begin{align*}
\sum_{j=0}^N \sum_{i \in \mathbf{N}} \zeta_i \, |\langle \phi_j , \psi_i \rangle_\mu |^2= & \sum_{j=0}^N \sum_{i \in \mathbf{N}} \zeta_i \, \langle \phi_j , \psi_i \rangle_\mu\, \langle \phi_j , \psi_i \rangle_\mu\\
= &  \bigg\langle \bigg\langle \sum_{i \in \mathbf{N}} \zeta_i \,   \psi_i(u) \, \psi_i(z), \sum_{j=0}^N \phi_j(u)\phi_j(z) \bigg\rangle_{\mu(du)} \bigg\rangle_{\mu(dz)}
\end{align*}
where, as usual,
\begin{equation*}
\langle f,g \rangle_\mu = \int_\mathcal{M} f(y)g(y)\mu(dy).
\end{equation*}
By observing that
\begin{equation}
\lim_{N \to \infty} \sum_{j=0}^N \phi_j(u)\phi_j(z) = \delta(u-z),
\end{equation}
by the completeness of $\{\phi_j\}_{j \in \mathbf{N}}$, we get that
\begin{align*}
\lim_{N \to \infty}\sum_{j=0}^N \sum_{i \in \mathbf{N}} \zeta_i \, |\langle \phi_j , \psi_i \rangle_\mu |^2= &\sum_{i \in \mathbf{N}} \zeta_i \, \| \psi_i \|_\mu = \sum_{i \in \mathbf{N}} \zeta_i.
\end{align*}
By collecting all pieces together we obtain that
\begin{align*}
\lim_{N \to \infty} \mathbb{E}\left[ \int_\mathcal{M} \left( T(m) -  \sum_{j =1}^N \phi_j(m) \, c_{j} \right)^2 \mu(dm) \right] =0
\end{align*}
and this concludes the proof.
\end{proof}

\begin{rk}
We observe that, if $\phi_j=\psi_j$ for all $j$ that is,  $\{ \phi_j \}$  is an orthonormal system of eigenfunctions with eigenvalues $\lambda_j$, $j \geq 0$ and solves the Fredholm-type equation \eqref{fredholm-eq} depending on $\zeta_j$, $j\geq 0$, then we have that
$$c_j = \sqrt{\zeta_j}\, \omega_j \sim N(0, \zeta_j).$$
\end{rk}

\subsection{Cauchy problems with random initial conditions}

We recall that the random field $T \in L^2(\mathcal{M})$ on the manifold $\mathcal{M}$ can be written as
\begin{equation}
T(m) = \sum_{j \in \mathbf{N}} \phi_j(m)\, c_j \label{T-rep-2}
\end{equation}
where the Fourier random coefficients are given in Theorem \ref{TheoExpansion}.
\begin{tm}
\label{theo-rf-pde}
The solution to
\begin{equation}
\left( \partial_t  - \mathbb{D}^\Psi_{\mathcal{M}} \right) u(m,t) = 0 \label{pde-rf}
\end{equation}
subject to the random initial condition
\begin{equation}
u(m,0)= T_0(m)=T(m) \in L^2(\mathcal{M}) \label{condition-stoch-pde}
\end{equation}
is the time dependent random field on $\mathcal{M}$ written as
\begin{equation}
u(m,t)= T^\Psi_t(m) = \sum_{j \in \mathbf{N}} e^{- t \Psi(\lambda_j )}  \phi_j(m)\, c_j . \label{T-rf-solution-pde}
\end{equation}
\end{tm}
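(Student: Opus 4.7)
The plan is to exploit the spectral representation of both the initial random field $T$ (from Theorem \ref{TheoExpansion}) and the subordinate semigroup $P^\Psi_t$ (from the Remark containing \eqref{subordinate-semigroup}), and then combine them by linearity. Since $\mathbb{D}^\Psi_\mathcal{M}$ acts on the orthonormal basis $\{\phi_j\}$ as $\mathbb{D}^\Psi_\mathcal{M} \phi_j = -\Psi(\lambda_j)\phi_j$ (by the definition \eqref{def-der-gen-Psi}), each product $e^{-t\Psi(\lambda_j)}\phi_j(m)$ is a deterministic solution of the homogeneous equation $(\partial_t - \mathbb{D}^\Psi_\mathcal{M}) v = 0$ with initial datum $\phi_j(m)$. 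Multiplying by the (random) Fourier coefficient $c_j$ and summing over $j$ produces the candidate solution \eqref{T-rf-solution-pde}.

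The steps I would take, in order, are the following. First, I would write $T(m) = \sum_{j} \phi_j(m)\, c_j$ in the $L^2(dP\otimes d\mu)$ sense as given by Theorem \ref{TheoExpansion}. Second, I would verify that for each realisation $\omega$, applying the subordinate semigroup $P^\Psi_t$ of the Remark term by term gives
\begin{equation*}
P^\Psi_t T(m) = \sum_{j \in \mathbf{N}} e^{-t\Psi(\lambda_j)}\phi_j(m)\, c_j,
\end{equation*}
using the fact that $P^\Psi_t \phi_j = e^{-t\Psi(\lambda_j)}\phi_j$. Third, I would check that this series converges in the $L^2(dP\otimes d\mu)$ sense for every $t\geq 0$: by orthonormality of $\{\phi_j\}$ and Parseval,
\begin{equation*}
\mathbb{E}\|T^\Psi_t\|_\mu^2 = \sum_{j \in \mathbf{N}} e^{-2t\Psi(\lambda_j)}\, \mathbb{E} c_j^2 \leq \sum_{j \in \mathbf{N}}\mathbb{E} c_j^2 = \mathbb{E}\|T\|_\mu^2 < \infty,
\end{equation*}
so the damping factors $e^{-t\Psi(\lambda_j)}\leq 1$ keep the tail summable. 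Fourth, I would justify differentiating term by term in $t$ and applying $\mathbb{D}^\Psi_\mathcal{M}$ term by term: on each partial sum this is trivial since $\partial_t e^{-t\Psi(\lambda_j)} = -\Psi(\lambda_j)e^{-t\Psi(\lambda_j)} = \mathbb{D}^\Psi_\mathcal{M}[e^{-t\Psi(\lambda_j)}\phi_j(m)]$, and the identity $(\partial_t - \mathbb{D}^\Psi_\mathcal{M})T^\Psi_t = 0$ then passes to the limit in the appropriate $L^2$-sense because $\Psi(\lambda_j)e^{-t\Psi(\lambda_j)}$ remains uniformly bounded in $j$ for each fixed $t>0$.

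Finally, to verify the initial condition, I would note that as $t\downarrow 0$ the coefficients $e^{-t\Psi(\lambda_j)}\to 1$ monotonically and the dominated convergence argument above gives
\begin{equation*}
\lim_{t\downarrow 0} \mathbb{E}\left\| T^\Psi_t - T\right\|_\mu^2 = \lim_{t\downarrow 0}\sum_{j \in \mathbf{N}} (1-e^{-t\Psi(\lambda_j)})^2\, \mathbb{E}c_j^2 = 0.
\end{equation*}
Uniqueness in the class of $L^2$-valued solutions then follows from orthogonality: any solution $u$ expanded as $u(m,t) = \sum_j a_j(t)\phi_j(m)$ must have coefficients satisfying $a_j'(t) = -\Psi(\lambda_j)a_j(t)$ with $a_j(0) = c_j$, hence $a_j(t) = c_j e^{-t\Psi(\lambda_j)}$.

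The main obstacle I anticipate is not the formal manipulation but the careful justification of interchanging the stochastic sum defining $T$ with the action of the nonlocal operator $\mathbb{D}^\Psi_\mathcal{M}$ and the time derivative, particularly at $t=0$ where no extra smoothing from $e^{-t\Psi(\lambda_j)}$ is available. This is handled by interpreting \eqref{pde-rf} and \eqref{condition-stoch-pde} in the $L^2(dP\otimes d\mu)$ sense rather than pointwise, so that the Parseval bounds above suffice and no additional regularity on $T$ beyond $T\in L^2(\mathcal{M})$ is required.
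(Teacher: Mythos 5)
Your argument is correct and follows essentially the same route as the paper: both rest on the eigenfunction relation $\mathbb{D}^\Psi_\mathcal{M}\phi_j = -\Psi(\lambda_j)\phi_j$ (obtained from $P_s\phi_j = e^{-s\lambda_j}\phi_j$ and the L\'evy--Khintchine representation of $\Psi$), followed by term-by-term differentiation of the series $\sum_j e^{-t\Psi(\lambda_j)}\phi_j(m)c_j$. Your version is somewhat more careful about the sense of convergence (explicit $L^2(dP\otimes d\mu)$ Parseval bounds, the bound $\Psi(\lambda_j)e^{-t\Psi(\lambda_j)}\leq 1/(et)$, and the $t\downarrow 0$ limit) and adds a uniqueness argument that the paper omits, but these are refinements of the same proof rather than a different approach.
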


\begin{proof}
By comparing \eqref{T-rep-2} with \eqref{T-rf-solution-pde} we immediately see that \eqref{condition-stoch-pde} is verified. Let us consider the fractional operator
\begin{align*}
\mathbb{D}^{\Psi}_\mathcal{M} u(m,t) = &- \int_\mathcal{M} u(y,t)J(m,y) \mu(dy)
\end{align*}
where the following expansion holds ($u \in L^2(\mathcal{M})$)
\begin{equation*}
u(x,t) = \sum_{j \in \mathbf{N}} e^{- t \Psi(\lambda_j )}  \phi_j(m)\, c_j, \quad x\in \mathcal{M}, \; t>0.
\end{equation*}
We can write
\begin{align}
\mathbb{D}^{\Psi}_\mathcal{M} u(m,t) = & -\sum_{j \in \mathbf{N}} e^{- t \Psi(\lambda_j )} c_j \int_\mathcal{M} \phi_j(y)  J(m,y) \mu(dy)\notag \\
= & \sum_{j \in \mathbf{N}} e^{- t \Psi(\lambda_j )} c_j \, \mathbb{D}^{\Psi}_\mathcal{M} \phi_j(m) \label{D-proof-stoch-pde}
\end{align}
where (see formula \eqref{D-op-one})
\begin{align*}
\mathbb{D}^{\Psi}_\mathcal{M} \phi_j(m) = & \int_0^\infty \left( P_s\, \phi_j(m) - \phi_j(m)\right) \nu(ds)\\
= & \int_0^\infty \left( \mathbb{E}\phi_j( B^m_s) - \phi_j(m)\right) \nu(ds)
\end{align*}
Since
\begin{align*}
\mathbb{E}\phi_j(B^m_s) = \sum_{i \in \mathbf{N}} e^{-s \lambda_i} \kappa_i \, \phi_i(m)\, \langle \phi_j, \phi_i \rangle_\mu
\end{align*}
where $\langle \phi_j, \phi_i \rangle_\mu= \delta_i^j$ and
\begin{equation}
\kappa_i = \int_\mathcal{M} \delta(y) \phi_i(y)\mu(dy) = 1.
\end{equation}  we get (see formula \eqref{law-B-on-M} as well)
\begin{equation}
P_s\, \phi_j(m) = e^{-s\, \lambda_j} \phi_j(m). \label{P-phi}
\end{equation}

By collecting all pieces together, we get
\begin{align*}
\mathbb{D}^{\Psi}_\mathcal{M} \phi_j(m) = & \int_0^\infty \left( e^{-s\, \lambda_j} \phi_j(m) - \phi_j(m)\right) \nu(ds)\\
= & \phi_j(m) \int_0^\infty \left( e^{-s\, \lambda_j} - 1\right) \nu(ds)\\
= & -\phi_j(m) \, \Psi(\lambda_j)
\end{align*}
where we have used the representation \eqref{symb-subordinator} of the symbol $\Psi$. In light of this, formula \eqref{D-proof-stoch-pde} takes the form
\begin{align*}
\mathbb{D}^{\Psi}_\mathcal{M} u(m,t) = & - \sum_{j \in \mathbf{N}} e^{- t \Psi(\lambda_j )} c_j \, \phi_j(m) \, \Psi(\lambda_j)
\end{align*}
and therefore,
\begin{equation*}
\left( \partial_t  - \mathbb{D}^\Psi_{\mathcal{M}} \right) u(m,t) = 0.
\end{equation*}

%
%

We also notice that
\begin{equation}
\sum_{j \in \mathbf{N}} \Big\| \Psi (\lambda_j) e^{-t \Psi(\lambda_j)}\, c_j \, \phi_j \Big\|_{\infty} \leq \sum_{j \in \mathbf{N}}  \Psi (\lambda_j) e^{-t \Psi(\lambda_j)}\, |c_j| \, \| \phi_j \|_{\infty} < \infty
\end{equation}
since $\Psi$ is a Bernstein function.
\end{proof}

\subsection{Special manifolds}
We give some examples of manifolds in the  following sections.

\subsubsection{The manifold $\mathcal{M} \equiv \mathbf{R}^n$.}

For the special case $\Psi(z)=|z|^\alpha$ we obtain
the fractional Laplacian
\begin{align}
\mathbb{D}^\Psi_{\mathbf{R}^n} f(x) =  & C_d(\alpha) \, \textrm{p.v.} \int_{\mathbf{R}^n} \frac{f(y) -f(x)}{|x-y|^{\alpha +d}}dy= -(-\triangle_{\mathbf{R}^d})^{\alpha} f(x), \quad x \in \mathbf{R}^d  \label{lap-princ-val}
\end{align}
where ''p.v.'' stands for the ''principal value'' being the integral above singular near the origin and $C_d(\alpha)$ is a normalizing constant depending on $d$ and $\alpha$.

\subsubsection{The manifold $\mathcal{M} \equiv \mathbf{S}^2$.} We consider the unit (two dimensional) sphere
\begin{align*}
\mathbf{S}^2 = & \left\lbrace z \in \mathbb{R}^3\, :\, |z| =1 \right\rbrace\\
= & \left\lbrace z \in \mathbb{R}^3\, :\, z =(\sin \vartheta \cos \varphi, \sin \vartheta \sin \varphi, \cos \vartheta), \, \vartheta \in [0, \pi], \, \varphi \in [0, 2\pi] \right\rbrace
\end{align*}
with
$$\mu(dz) = \sin \vartheta\, d\vartheta \, d \varphi.$$
The sphere $\mathbf{S}^2$ is an example of a compact manifold without boundary. For $\lambda_l= l(l+1)$ and $l \geq 0$, the spherical harmonics
\begin{equation*}
Y_{lm}(\vartheta, \varphi) = \sqrt{\frac{2l+1}{4 \pi} \frac{(l-m)!}{(l+m)!}} Q_{lm}(\cos \vartheta) e^{im\varphi}
\end{equation*}
solve the eigenvalue problem
\begin{equation}
\triangle_{\mathbf{S}_{1}^2} Y_{lm}= - \lambda_l \, Y_{lm} \label{eigenY}, \quad l \geq 0, \; |m| \leq l
\end{equation}
where
\begin{align}
\triangle_{\mathbf{S}_{1}^2} = &  \frac{1}{\sin \vartheta} \frac{\partial}{\partial \vartheta} \left( \sin \vartheta \frac{\partial}{\partial \vartheta} \right) + \frac{1}{\sin^2 \vartheta} \frac{\partial^2}{\partial \varphi^2}, \quad \vartheta \in [0,\pi],\; \varphi \in [0, 2\pi], \label{spherical-laplace}
\end{align}
is the spherical Laplace operator and
\begin{equation*}
Q_{lm}(z)=(-1)^m (1-z^2)^{m/2}\frac{d^m}{d z^m}Q_{l}(z)
\end{equation*}
are the associated Legendre functions with Legendre polynomials defined as
$$ Q_l(z) = \frac{1}{2^l l!}\frac{d^l}{dz^l} (z^2 - 1)^l. $$
For a detailed discussion see, for example, \cite{MarPeccBook}. We have that
\begin{equation}
\E(T(x)T(y))=\mathscr{K}(x,y) = \sum_{l \in \mathbf{N}} C_l \frac{2l+1}{4\pi} Q_l(\langle x, y \rangle) \label{cov-spherical}
\end{equation}
where
\begin{equation}
\langle x, y\rangle = \cos d(x,y)
\end{equation}
is the usual inner product in $\mathbf{R}^3$ ($d(x,y)$ is the spherical distance) and
\begin{equation}
\int_{\mathbf{S}^2_1} \mathscr{K}(x,y) Y_{lm}(y) \mu(dy) = C_l\, Y_{lm}(x).
\end{equation}
We recall that (addition formula)
\begin{equation*}
\sum_{m=-l}^{+l} Y_{lm}(x)Y^*_{lm}(y) = \frac{2l+1}{4\pi} Q_l(\langle x, y \rangle)
\end{equation*}
and we recover \eqref{cov-spherical} from \eqref{RKHS-K}. Thus, $\zeta_l = C_l$ (and $\lambda_l=l(l+1)$ as pointed out before) for all $l\geq 0$, which is the angular power spectrum of the spherical random field $T(x)$, $x\in \mathbf{S}^2$. Furthermore,
\begin{equation}
T(x) = \sum_{l \in \mathbf{N}} \sum_{|m| \leq l} c_{lm} Y_{lm}(x)
\end{equation}
where $c_{lm}$ are Gaussian r.v.'s (since  $T$  is Gaussian). Therefore, due to the fact that $\{Y_{lm}\}$ represents an orthonormal system of eigenfunctions of $\triangle_{\mathbf{S}^2}$ solving the Fredholm-type integral equation \eqref{fredholm-eq}, then $\theta_{ij}=\delta_i^j$ in \eqref{theta-ij} and we get that
$$c_{lm} = \sqrt{C_l} \omega_{l} \sim N(0, C_l), \quad  \textrm{for all }\, |m|\leq l \, \textrm{ and } \, l\geq 0.$$
Also we observe that
\begin{equation}
q(x,y,t) = \sum_{l \in \mathbf{N}} \sum_{m=-l}^{+l} e^{-t \Psi(\lambda_l)} Y_{lm}(x)Y^*_{lm}(y) = \sum_{l \in \mathbf{N}} e^{-t \Psi(\lambda_l)} \frac{2l+1}{4\pi} Q_{l}(\langle x, y\rangle)
\end{equation}
is the kernel solving the fractional equation
\begin{equation}
\partial_t u = \mathbb{D}^\Psi_{\mathbf{S}^2} u
\end{equation}
where
\begin{equation}
- \partial_t\, q(x,y,t) \Big|_{t=0^+} = J(x,y).
\end{equation}
For $\Psi(z)=z^\alpha$ we get that
\begin{equation}
\mathbb{D}^\Psi_{\mathbf{S}^2} u = -(-\triangle_{\mathbf{S}^2})^\alpha u.
\end{equation}
It is worth to mention that spherical random fields have been considered by many authors in order to study Cosmic Microwave Background (CMB) radiation in the theory explained by the Big Bang model. In particular, CMB radiation is a radiation filling the universe almost everywhere and it can be affected by several anisotropies. Our aim in this direction is to explain such anisotropies by considering our coordinates changed random field on $\mathcal{M}=\mathbf{S}^2_1$. This fact should become  clear later on, in Remark \ref{lastRk}. Here, the characterization of the angular power spectrum turns out to be very important. Indeed, such a study allow to explain many aspects such as Sachs-Wolfe effect or Silk damping effect for instance. For a deep discussion on this topic we refer to \cite{MarPeccBook} or our recent paper \cite{DovNane1}.


\subsubsection{The manifold  $\mathcal{M} \equiv \mathbf{T}^2$: Torus}
 We now consider the compact (two dimensional) manifold $\mathbf{T}^2$ which is the quotient of the unit square $Q=[0,1]^2 \subset \mathbf{R}^2$ by the equivalence relation
\begin{equation*}
(x,y) \sim (x+1, y) \sim (x, y+1 )
\end{equation*}
equipped with the quotient topology. It is well known that the Laplace-Beltrami operator on the $n$-torus is written as $\triangle_{\mathbf{T}^n} = \sum_{j=1}^n \frac{\partial^2}{\partial x_j^2}$ where $x_j$ is a variable such that, for all $j$, it describes the circle $\mathbf{S}^1 =\{e^{ix_j}:\, -\pi < x_j < \pi\}$. An integrable function $f$ on $\mathbf{T}^n$ is therefore written as
\begin{equation*}
f(x) = \sum_{k \in \mathbf{Z}^n} f_k\, e^{i(k \cdot x)}, \quad x \in \mathbf{T}^n
\end{equation*}
where $k \cdot x = \sum_j k_j x_j$ and
\begin{equation*}
f_k = \frac{1}{(2\pi)^n}\int_{\mathbf{T}^n} f(x) e^{-i (k \cdot x)}\mu(dx).
\end{equation*}
The heat kernel on $\mathbf{T}^n$ takes the form
\begin{equation}
p(x,t) = \frac{1}{(4\pi)^{n/2}} \sum_{k \in \mathbf{Z}^n} \exp \left(- \frac{|x-2\pi k|^2}{4t} \right)
\end{equation}
and the transition semigroup is therefore written as
\begin{equation}
P_tf(x) = \int_{\mathbf{T}^n} p(x-y,t) f(y)\mu(dy).
\end{equation}
Also in this case we can study the differential operator $-\Psi(-\triangle)$. See for example the paper by Bochner \cite{Bochner-torus} or the recent work \cite{roncal-stinga} where the authors investigate the fractional power of the Laplace operator on the torus.

%
%

\section{ Time changed Brownian manifolds }

In this section we  introduce the time-dependent random field
\begin{equation}
\mathfrak{T}_t(m)  = T(B^m_t),\quad m\in \mathcal{M}, \; t>0
\end{equation}
which can be conveniently written as
\begin{equation}
\mathfrak{T}_t(m) = \sum_{j \in \mathbf{N}} \phi_j( B^m_t)\, c_j
\end{equation}
where $B^m_t$, $t\geq 0$ is a Brownian motion on $\mathcal{M}$ started at $m \in \mathcal{M}$ and $T$ is the random field on $\mathcal{M}$ with representation \eqref{Trep}. In this section we study the compositions involving both the subordinate and the time changed  Brownian motions leading to the time-dependent random fields
\begin{equation}
\mathfrak{T}^\Psi_t(m) = T( B^m_{S_t}),\quad m\in \mathcal{M}, \; t>0, \; \alpha \in (0,1) \label{T-rf-subordinateBM}
\end{equation}
and
\begin{equation}
\mathfrak{T}^\beta_t(m) = T( B^m_{E_t}),\quad m\in \mathcal{M}, \; t>0, \; \beta \in (0,1). \label{T-rf-timechangedBM}
\end{equation}
Recall that $D_t$ is a subordinator with symbol $\Psi$ whereas $E_t=E_t^\beta$ is an inverse to a stable subordinator of index $\beta \in (0,1)$ defined by the formula \eqref{inverse-stable}. We assume that the random times $S_t$ and $E_t$ are independent from $B^m_t$. In the following results we obtain coordinate changed random fields starting from the random fields  \eqref{T-rf-subordinateBM} and \eqref{T-rf-timechangedBM} indexed by different time-changed Brownian manifolds. We consider the sets $\{ B^m_{S_t} \} \subset \mathcal{M}$ and $\{ B^m_{E_t}\} \subset \mathcal{M}$ as new sets of indices for the random field $T$.

\subsection{Space-time fractional equations with random initial conditions} In this section we relate the solutions 
to the equations \eqref{pde-rf} and \eqref{pdeFracmanifold} (with random initial condition) with the coordinate 
changed random fields introduced so far. 
\begin{lem}\label{lemma1}
The random field in  \eqref{T-rf-solution-pde} can be represented as
\begin{equation}
T^\Psi_t(m) = \mathbb{E}\left[ T( B^m_{S_t}) \Big| \mathfrak{F}_T \right] \label{T-alpha-lemma}
\end{equation}
where $\mathfrak{F}_T$ is the $\sigma$-field generated by the random field $T$.
\end{lem}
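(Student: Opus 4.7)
The plan is to exploit the spectral expansion \eqref{Trep} of $T$ together with the independence of $(B^m, S_t)$ from the random field $T$. Writing $T(m) = \sum_{j \in \mathbf{N}} \phi_j(m)\, c_j$ in $L^2(dP \otimes d\mu)$ sense (Theorem \ref{TheoExpansion}), I would formally substitute $B^m_{S_t}$ for $m$ to get
\begin{equation*}
T(B^m_{S_t}) = \sum_{j \in \mathbf{N}} \phi_j(B^m_{S_t})\, c_j,
\end{equation*}
and then take conditional expectation with respect to $\mathfrak{F}_T$. Since $c_j$ is $\mathfrak{F}_T$-measurable while $B^m$ and $S_t$ are independent of $\mathfrak{F}_T$, the conditional expectation pulls $c_j$ outside and acts on $\phi_j(B^m_{S_t})$ as a plain expectation.

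The key computation is then to identify $\mathbb{E}[\phi_j(B^m_{S_t})]$. By conditioning on $S_t$ and using \eqref{P-phi}, which gives $P_s \phi_j(m) = e^{-s\lambda_j}\phi_j(m)$ because $\phi_j$ is an eigenfunction of $\triangle$, I get
\begin{equation*}
\mathbb{E}[\phi_j(B^m_{S_t})] = \int_0^\infty P_s \phi_j(m)\, \P\{S_t \in ds\} = \phi_j(m) \int_0^\infty e^{-s\lambda_j}\, \P\{S_t \in ds\} = e^{-t \Psi(\lambda_j)}\phi_j(m),
\end{equation*}
using the Laplace transform \eqref{gen-lap-transform} of the subordinator. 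Summing term by term reproduces exactly \eqref{T-rf-solution-pde}, so $\mathbb{E}[T(B^m_{S_t}) \mid \mathfrak{F}_T] = T^\Psi_t(m)$.

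The main obstacle is rigorously justifying the interchange of the conditional expectation and the infinite sum, since the series for $T$ only converges in $L^2(dP \otimes d\mu)$. I would handle this by truncating the Karhunen-Loeve / spectral expansion at level $N$, applying conditional expectation to the finite sum (where the exchange is trivial), and then letting $N \to \infty$. Convergence of $\sum_{j \leq N} e^{-t\Psi(\lambda_j)} \phi_j(m)\, c_j$ in $L^2(\Omega \times \mathcal{M})$ follows from the decay of $e^{-t\Psi(\lambda_j)}$ (together with the bounds on $\|\phi_j\|_\infty$ and the variance of $c_j$ used throughout the paper), and conditional expectation is a contraction in $L^2$, so the limit passes to the conditional expectation as well. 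This yields the identity \eqref{T-alpha-lemma} and completes the proof.
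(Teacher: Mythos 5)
Your proposal is correct and follows essentially the same route as the paper's proof: expand $T(B^m_{S_t})=\sum_j \phi_j(B^m_{S_t})\,c_j$, pull the $\mathfrak{F}_T$-measurable coefficients $c_j$ out of the conditional expectation by independence, and identify $\mathbb{E}[\phi_j(B^m_{S_t})]=e^{-t\Psi(\lambda_j)}\phi_j(m)$ via the subordinate semigroup (the paper cites $\mathbb{P}_t\phi_j=e^{-t\Psi(\lambda_j)}\phi_j$ directly, which is exactly your computation conditioning on $S_t$ and using the Laplace transform \eqref{gen-lap-transform}). Your added truncation argument to justify exchanging the sum with the conditional expectation is a point the paper leaves implicit, so it only strengthens the write-up.
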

\begin{proof}
First we write (see \eqref{Trep})
\begin{align*}
T( B^m_{S_t}) = \sum_{j \in \mathbf{N}} \phi_j(B^m_{S_t})\, c_j.
\end{align*}
Thus,
\begin{align*}
\mathbb{E}\left[ T( B^m_{S_t}) \Big| \mathfrak{F}_T \right] = & \sum_{j \in \mathbf{N}} \mathbb{E}\left[ \phi_j( B^m_{S_t})\, c_j \Big| \mathfrak{F}_T \right]
\end{align*}
where, we recall that
\begin{equation*}
c_j = \int_\mathcal{M} T(m) \phi_j(m)\mu(dm)
\end{equation*}
and therefore
\begin{align*}
\mathbb{E}\left[ T( B^m_{S_t}) \Big| \mathfrak{F}_T \right] = & \sum_{j \in \mathbf{N}} \mathbb{E}\left[ \phi_j( B^m_{S_t}) \right] \, c_j .
\end{align*}
We have that
\begin{equation*}
\mathbb{E}\left[ \phi_j( B^m_{S_t}) \right] = \mathbb{P}_t \, \phi_j(m)
\end{equation*}
where $\mathbb{P}_t = \exp(t \mathbb{D}^\Psi_\mathcal{M})$ is the semigroup associated with the problem in Theorem \ref{theo-rf-pde}. As in formula \eqref{P-phi} we get that
\begin{equation}
\mathbb{P}_t \, \phi_j(m) = e^{-t \Psi(\lambda_j)} \phi_j(m)
\end{equation}
and
\begin{align*}
\mathbb{E}\left[ T( B^m_{S_t}) \Big| \mathfrak{F}_T \right] = & \sum_{j \in \mathbf{N}} e^{-t \Psi(\lambda_j)} \phi_j(m) \, c_j
\end{align*}
which is the spectral representation of $T^\alpha_t$.
\end{proof}

Next we introduce the space of random fields  on $\mathcal{M}$ given by
\begin{equation}
\mathbb{H}_F^s(\mathcal{M}) = \left\lbrace T \, \textrm{ such that \eqref{Trep} holds and }\, \sum_{j \in \mathbf{N}} (\lambda_j)^{2s} \, \mathbb{E}c_j^2 < \infty  \right\rbrace
\end{equation}
that is $\mathbb{H}_F^s(\mathcal{M}) \subset L^2(\mathcal{M})$.
\begin{tm}
For $s > (3+3n)/4$, the solution to the problem \eqref{pdeFracmanifold} with random initial condition $T_0^\beta(m) = T(m) \in \mathbb{H}_F^s(\mathcal{M})$, $m \in \mathcal{M}$ is written as
\begin{equation}
T^\beta_t(m) = \sum_{j \in \mathbf{N}} E_{\beta}(-t^\beta \lambda_j) \, \phi_j(m)\, c_j, \quad  m \in \mathcal{M},\,  t \geq 0 \label{T-rf-solution-pde-time-frac}
\end{equation}
where
\begin{equation*}
c_j = \int_{\mathcal{M}} T(x) \phi_j(x) \mu(dx), \quad j \in \mathbf{N}
\end{equation*}
and \eqref{T-rf-solution-pde-time-frac} holds in $L^2(dP \otimes d\mu)$ sense, i.e.
\begin{equation}
\lim_{L \to \infty} \E \left[ \int_\mathcal{M} \left( T^\beta_t(m) - \sum_{j =1}^L E_{\beta}(-t^\beta \lambda_j) \, \phi_j(m)\, c_j \right)^2 \mu(dm) \right]=0.
\end{equation}
\end{tm}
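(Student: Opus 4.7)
The plan is to combine the spectral representation of Theorem~\ref{TheoExpansion} with the deterministic existence result of Theorem~\ref{Theorem-closed-fractional} by linearity, and then verify the convergence in the $L^2(dP \otimes d\mu)$ sense. By Theorem~\ref{TheoExpansion} the initial random field admits the expansion $T(m) = \sum_{j \in \mathbf{N}} \phi_j(m)\, c_j$ with Fourier random coefficients $c_j = \int_\mathcal{M} T(m)\phi_j(m)\mu(dm)$, and the hypothesis $T \in \mathbb{H}_F^s(\mathcal{M})$ with $s > (3+3n)/4$ provides the weighted mean-square summability $\sum_j \lambda_j^{2s}\,\mathbb{E}c_j^2 < \infty$. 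This plays, in expectation, the role of the deterministic decay $|\kappa_j|\leq C \lambda_j^{-s}$ exploited in Theorem~\ref{Theorem-closed-fractional}.

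By linearity of the Cauchy problem and the eigenfunction identities $\triangle\phi_j = -\lambda_j \phi_j$ together with $D_t^\beta E_\beta(-\lambda_j t^\beta)=-\lambda_j E_\beta(-\lambda_j t^\beta)$ from \eqref{eigenDcaputo}, the deterministic function $E_\beta(-t^\beta \lambda_j)\phi_j(m)$ solves \eqref{pdeFracmanifold} with initial datum $\phi_j$. Superposition with the random Fourier coefficients yields the natural candidate
\begin{equation*}
T_t^\beta(m) = \sum_{j\in\mathbf{N}} E_\beta(-t^\beta\lambda_j)\,\phi_j(m)\, c_j.
\end{equation*}
For the $L^2(dP\otimes d\mu)$ convergence claim I would compute directly, using $L^2(\mathcal{M},\mu)$-orthonormality of $\{\phi_j\}$, Fubini-Tonelli, and the bound $0 \le E_\beta(-t^\beta\lambda_j) \le 1$ from \eqref{Ebound}:
\begin{equation*}
\mathbb{E}\int_\mathcal{M}\Bigl(T_t^\beta(m) - \sum_{j=1}^L E_\beta(-t^\beta\lambda_j)\phi_j(m)c_j\Bigr)^2\mu(dm) = \sum_{j>L} E_\beta(-t^\beta\lambda_j)^2\,\mathbb{E}c_j^2 \leq \sum_{j>L}\mathbb{E}c_j^2,
\end{equation*}
which tends to zero as $L\to\infty$ since property A.3 forces $\sum_j \mathbb{E}c_j^2<\infty$. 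Note that this tail bound alone does not require $s > (3+3n)/4$; the full hypothesis enters only when promoting $T_t^\beta$ to an honest (strong) solution of the fractional PDE.

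The main obstacle is thus the almost-sure term-by-term differentiation of the series. I would mirror the argument of Theorem~\ref{Theorem-closed-fractional}, replacing the termwise bound $|\kappa_j|\le C\lambda_j^{-s}$ by a Cauchy-Schwarz inequality in the index $j$: for any exponent $a$,
\begin{equation*}
\sum_j \lambda_j^a (\mathbb{E}c_j^2)^{1/2}\|\phi_j\|_\infty \le \Bigl(\sum_j \lambda_j^{2s}\mathbb{E}c_j^2\Bigr)^{1/2}\Bigl(\sum_j \lambda_j^{2a-2s}\|\phi_j\|_\infty^2\Bigr)^{1/2},
\end{equation*}
whose second factor is finite for $a \in \{0,\tfrac12,1\}$ precisely under $s>(3+3n)/4$, using $\|\phi_j\|_\infty \le C\lambda_j^{(n-1)/4}$, $\lambda_j \sim C_n j^{2/n}$, and the Mittag-Leffler derivative bound $|\partial_t E_\beta(-\lambda t^\beta)|\le c\lambda t^{\beta-1}$ of \cite{krageloh}. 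The delicate point is lifting this mean-square control to almost-sure uniform convergence of $\Delta T_t^\beta$ and $\partial_t^\beta T_t^\beta$ on $\mathcal{M}\times [\varepsilon,T]$; I would handle this by a Borel-Cantelli/Chebyshev argument along a rapidly growing subsequence of truncations. Once term-by-term application of $\partial_t^\beta$ and $\Delta$ is justified, the identity $(\partial_t^\beta - \Delta)T_t^\beta = 0$ collapses to the eigenfunction relation term by term, as in the closing calculation of the proof of Theorem~\ref{Theorem-closed-fractional}, and the initial condition is recovered by continuity of the Mittag-Leffler function at $t=0$.
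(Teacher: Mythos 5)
Your proposal is correct and follows essentially the same route as the paper, whose entire proof consists of the sentence ``use the same arguments as in the proofs of Theorem \ref{Theorem-closed-fractional} and Theorem \ref{TheoExpansion}, with the initial condition recovered from $E_\beta(0)=1$'' --- you have simply filled in the details of that combination. One small simplification: the almost-sure termwise differentiation does not need a Borel--Cantelli argument, since the pathwise Cauchy--Schwarz bound $\sum_j \lambda_j^a |c_j|\,\|\phi_j\|_\infty \le \bigl(\sum_j \lambda_j^{2s} c_j^2\bigr)^{1/2}\bigl(\sum_j \lambda_j^{2a-2s}\|\phi_j\|_\infty^2\bigr)^{1/2}$ is finite a.s.\ because the first factor has finite expectation of its square.
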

\begin{proof}
We use the same arguments as in the proof of Theorem \ref{Theorem-closed-fractional} and Theorem  \ref{TheoExpansion}. The initial condition is satisfied by taking into account that $E_\beta(0)=1$, see formula \eqref{mittag-leffler-function}.
\end{proof}

\begin{lem}
The random field in Equation \eqref{T-rf-solution-pde-time-frac} can be represented as
\begin{equation}
T^\beta_t(m) = \mathbb{E}\left[ T( B^m_{E_t}) \Big| \mathfrak{F}_T \right] \label{T-beta-lemma}
\end{equation}
where $\mathfrak{F}_T$ is the $\sigma$-field generated by the random field $T$.
\end{lem}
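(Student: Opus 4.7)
The plan is to mimic the proof of Lemma \ref{lemma1}, replacing the subordinator $S_t$ and its semigroup $\mathbb{P}_t$ by the inverse stable subordinator $E_t$ and the corresponding (non-Markovian) family of operators that maps an eigenfunction $\phi_j$ to $E_\beta(-t^\beta \lambda_j)\phi_j$.

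First I would expand $T$ in its Fourier series \eqref{Trep} and substitute $B^m_{E_t}$ for $m$, obtaining
\begin{equation*}
T(B^m_{E_t}) = \sum_{j \in \mathbf{N}} \phi_j(B^m_{E_t})\, c_j.
\end{equation*}
Since the coefficients $c_j=\int_\mathcal{M} T(y)\phi_j(y)\mu(dy)$ are $\mathfrak{F}_T$-measurable, and the time-changed Brownian motion $B^m_{E_t}$ is by assumption independent of the random field $T$ (hence of $\mathfrak{F}_T$), conditioning on $\mathfrak{F}_T$ turns each product into $c_j\,\mathbb{E}\phi_j(B^m_{E_t})$, giving
\begin{equation*}
\mathbb{E}\bigl[T(B^m_{E_t})\,\bigl|\,\mathfrak{F}_T\bigr] = \sum_{j \in \mathbf{N}} c_j\,\mathbb{E}\phi_j(B^m_{E_t}).
\end{equation*}

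Next I would identify $\mathbb{E}\phi_j(B^m_{E_t})$ using Theorem \ref{Theorem-closed-fractional} (or Theorem \ref{frac-laplace-pde} in the boundary case) with initial datum $f=\phi_j$. Because $\{\phi_i\}$ is orthonormal, the Fourier coefficients of $\phi_j$ reduce to $\kappa_i=\delta_i^j$, so the stochastic representation of that theorem collapses to
\begin{equation*}
\mathbb{E}\phi_j(B^m_{E_t}) = E_\beta(-t^\beta \lambda_j)\,\phi_j(m).
\end{equation*}
Plugging this back produces exactly the spectral representation \eqref{T-rf-solution-pde-time-frac} of $T^\beta_t(m)$.

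The only non-routine point is legitimating the interchange of conditional expectation and the infinite sum. I would handle this by working at the level of the partial sums $T_L(B^m_{E_t})=\sum_{j=1}^L \phi_j(B^m_{E_t})c_j$, for which the interchange is immediate by linearity, and then passing to the limit in $L^2(dP\otimes d\mu)$: the hypothesis $T\in \mathbb{H}_F^s(\mathcal{M})$ together with the eigenfunction bounds $\|\phi_j\|_\infty \lesssim \lambda_j^{(n-1)/4}$ and the asymptotics $\lambda_j\sim C_n j^{2/n}$ yields the same summability estimate used in Theorem \ref{Theorem-closed-fractional}, and the contractivity $|E_\beta(-t^\beta \lambda_j)|\le 1$ (see \eqref{Ebound}) guarantees that the conditioned series converges in $L^2(dP\otimes d\mu)$. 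This convergence, combined with the contraction property of conditional expectation on $L^2$, closes the argument. The main obstacle I anticipate is exactly this $L^2$ bookkeeping for the conditional series; once it is in place, the identification with \eqref{T-rf-solution-pde-time-frac} is automatic.
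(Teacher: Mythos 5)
Your proposal is correct and follows essentially the same route as the paper: expand $T$ in the basis $\{\phi_j\}$, use independence of $B^m_{E_t}$ from $\mathfrak{F}_T$ to pull out $c_j$, and identify $\mathbb{E}\phi_j(B^m_{E_t})=E_\beta(-t^\beta\lambda_j)\phi_j(m)$ via Theorem \ref{Theorem-closed-fractional} with initial datum $\phi_j$. The only difference is that you spell out the $L^2(dP\otimes d\mu)$ justification for interchanging the conditional expectation with the infinite sum, which the paper leaves implicit; that is a welcome addition but not a different argument.
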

\begin{proof}
As in  proof of Lemma \ref{lemma1} we can write
\begin{align*}
T( B^m_{E^\beta_t}) = \sum_{j \in \mathbf{N}} \phi_j( B^m_{E^\beta_t})\, c_j
\end{align*}
where we use also the superscript $\beta$ in order to underline the connection with Theorem \ref{Theorem-closed-fractional}. As before, we have that
\begin{equation*}
\mathbb{E}\left[ T( B^m_{E^\beta_t}) \Big| \mathfrak{F}_T \right] = \sum_{j \in \mathbf{N}} \mathbb{E}\left[\phi_j( B^m_{E^\beta_t})\right]\, c_j.
\end{equation*}
From \eqref{sol-closed-fractional} and the orthogonality of $\{\phi_j \}$, we have that
\begin{align*}
\mathbb{E}\left[\phi_j( B^m_{E^\beta_t})\right] = E_{\beta}(-t^\beta \lambda_j) \, \phi_j(m)
\end{align*}
and therefore,
\begin{equation*}
\mathbb{E}\left[ T( B^m_{E^\beta_t}) \Big| \mathfrak{F}_T \right] =  \sum_{j \in \mathbf{N}} E_{\beta}(-t^\beta \lambda_j) \, \phi_j(m)\, c_j
\end{equation*}
which coincides with the spectral representation \eqref{T-rf-solution-pde-time-frac}.
\end{proof}

\subsection{Spectrum for time-changed random fields}
We recall that, for the random Fourier coefficients we have that
\begin{align*}
\mathbb{E}[c_k c_j] = & \int_\mathcal{M}\int_\mathcal{M} \mathscr{K}(x,y) \phi_k(x)\phi_j(y) \mu(dx)\mu(dy) \\
= & \sum_{i \in \mathbf{N}} \zeta_i \int_\mathcal{M}\int_\mathcal{M} \psi_i(x)\psi_i(y) \phi_k(x)\phi_j(y) \mu(dx)\mu(dy)\\
= & \sum_{i \in \mathbf{N}} \zeta_i \, \theta_{ki}\theta_{ji}
\end{align*}
as pointed out before. For $k=j$, we get the spectrum
\begin{equation}
C_j = \mathbb{E}c_j^2 = \sum_{i \in \mathbf{N}} \zeta_i \, \theta_{ji}^2, \quad j=0,1,2,\ldots .
\end{equation}
From the fact that
\begin{align*}
\int_\mathcal{M}T^2(x)\mu(dx) = \sum_{j \in \mathbf{N}} c_j^2
\end{align*}
since $T\in L^2(\mathcal{M})$ we get that
\begin{align*}
\sum_{j \in \mathbf{N}} \mathbb{E} c_j^2 = \mathbb{E} \left( \int_\mathcal{M}T^2(x)\mu(dx) \right) < \infty
\end{align*}
and therefore in particular, if
\begin{equation}
\mathbb{E}c_j^2 = C_j \sim j^{-\gamma}, \quad \gamma >2 \label{spec-T}
\end{equation}
then this ensures summability and $T$ is a square integrable random field on $\mathcal{M}$.

We present the following results concerning the spectrum of the random fields introduced so far.
\begin{tm}
Let $c^\Psi_j(t)$, $t\geq 0$, $j \in \mathbb{N}$ be the spectrum of \eqref{T-alpha-lemma} and suppose that $c_j$'s satisfy \eqref{spec-T}. Then,
\begin{equation}
\mathbb{E} [c^\Psi_j(t)]^2 = C_j e^{-2t\Psi(\lambda_j)} \approx j^{-\gamma} e^{-t \Psi(j^{2/n})}, \qquad \textrm{ as } \, j \to \infty. \label{spec-alpha-asymptotocs}
\end{equation}
\end{tm}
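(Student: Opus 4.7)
The plan is essentially to read off the spectrum directly from the spectral representation established in Lemma 1, and then apply the Weyl-type asymptotics for the eigenvalues together with the hypothesized decay of the $C_j$'s.

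First, I would invoke Lemma 1 (or equivalently Theorem \ref{theo-rf-pde}) to write
\begin{equation*}
T^\Psi_t(m) = \sum_{j \in \mathbf{N}} e^{-t \Psi(\lambda_j)} \phi_j(m) \, c_j,
\end{equation*}
and then observe that, since $\{\phi_j\}_{j \in \mathbf{N}}$ is an orthonormal system in $L^2(\mathcal{M},\mu)$, the $j$-th random Fourier coefficient of $T^\Psi_t(m)$ with respect to this basis must be
\begin{equation*}
c^\Psi_j(t) = \int_\mathcal{M} T^\Psi_t(m)\, \phi_j(m)\, \mu(dm) = e^{-t \Psi(\lambda_j)}\, c_j.
\end{equation*}

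Next, since the factor $e^{-t \Psi(\lambda_j)}$ is deterministic, I would simply square and take expectations:
\begin{equation*}
\mathbb{E}\bigl[c^\Psi_j(t)\bigr]^2 = e^{-2 t \Psi(\lambda_j)}\, \mathbb{E} c_j^2 = C_j\, e^{-2 t \Psi(\lambda_j)},
\end{equation*}
which is the first equality in the claim.

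For the asymptotic statement, I would combine the hypothesized decay $C_j \sim j^{-\gamma}$ with the eigenvalue asymptotics \eqref{eigenvalue-asymptotics}, which yields $\lambda_j \asymp j^{2/n}$ as $j \to \infty$. Substituting this into the exponential factor gives $e^{-2t\Psi(\lambda_j)} \asymp e^{-t\Psi(j^{2/n})}$ up to the constant absorbed into $\asymp$, and the product with $C_j$ produces $j^{-\gamma} e^{-t\Psi(j^{2/n})}$, which is the stated approximation. There is no real obstacle here: the argument is a direct consequence of the spectral representation in Lemma 1 combined with the Weyl asymptotics, and the only mild care needed is to remember that the symbol $\Psi$ of a subordinator is a Bernstein function so that the exponential factor is well behaved and the computation genuinely identifies $c^\Psi_j(t)$ as the Fourier coefficient in $L^2(\mathcal{M},\mu)$.
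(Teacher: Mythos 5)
Your proposal is correct and follows essentially the same route as the paper: identify $c^\Psi_j(t)=c_j e^{-t\Psi(\lambda_j)}$ by integrating the spectral representation against $\phi_j$ and using orthonormality, then deduce $\mathbb{E}[c^\Psi_j(t)]^2=C_je^{-2t\Psi(\lambda_j)}$ and invoke \eqref{eigenvalue-asymptotics} together with \eqref{spec-T} for the asymptotics. The paper's own argument is exactly this computation, stated just as briefly, so there is nothing further to add.
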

\begin{proof}
We obtain that
\begin{align*}
\int_{\mathcal{M}} T^\Psi_t(m) \phi_j(m)\mu(dm) = & \sum_{i \in \mathbf{N}} c_i e^{-t\Psi(\lambda_i)} \int_\mathcal{M} \phi_i(m) \phi_j(m)\mu(dm) \\
= & c_j e^{-t \Psi(\lambda_j)}
\end{align*}
where
$$c^\Psi_j(t) = c_j e^{-t \Psi(\lambda_j)}$$
is the spectrum of $T^\Psi_t$. From \eqref{eigenvalue-asymptotics} and \eqref{spec-T}, formula \eqref{spec-alpha-asymptotocs} immediately follows.
\end{proof}

\begin{tm}
Let $c^\beta_j(t)$, $t\geq 0$, $j \in \mathbb{N}$ be the spectrum of \eqref{T-beta-lemma} and suppose that $c_j$'s satisfy \eqref{spec-T}. Then,
\begin{equation}
\mathbb{E} [c^\beta_j(t)]^2 = C_j [E_\beta(-t^\beta \lambda_j)]^2 \approx j^{-\gamma} (1+t^\beta j^{2/n})^{-2}, \qquad \textrm{ as } \, j \to \infty. \label{spec-beta-asymptotics}
\end{equation}
\end{tm}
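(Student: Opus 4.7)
The plan is to follow exactly the template used in the preceding theorem for $c_j^\Psi(t)$, replacing the exponential factor $e^{-t\Psi(\lambda_j)}$ by the Mittag-Leffler factor $E_\beta(-t^\beta \lambda_j)$, and then to invoke the standard two-sided bound on the Mittag-Leffler function together with the Weyl-type eigenvalue asymptotic \eqref{eigenvalue-asymptotics}.

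First I would identify the Fourier coefficient $c_j^\beta(t)$ from the spectral representation \eqref{T-rf-solution-pde-time-frac}. Since $\{\phi_i\}$ is an orthonormal system in $L^2(\mathcal{M},\mu)$, projecting
\[
T^\beta_t(m) = \sum_{i \in \mathbf{N}} E_\beta(-t^\beta \lambda_i)\,\phi_i(m)\,c_i
\]
onto $\phi_j$ gives $c_j^\beta(t) = c_j\,E_\beta(-t^\beta \lambda_j)$. Squaring and taking expectations, and using the assumption \eqref{spec-T} through $\mathbb{E}c_j^2 = C_j$, yields the exact identity
\[
\mathbb{E}[c_j^\beta(t)]^2 = C_j\,[E_\beta(-t^\beta \lambda_j)]^2.
\]

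The second step is to pass to the asymptotic $\approx j^{-\gamma}(1+t^\beta j^{2/n})^{-2}$. For the Mittag-Leffler factor, the bound \eqref{Ebound} reads $0\le E_\beta(-z^\beta)\le (1+z^\beta)^{-1}$, and combined with the small-$z$ expansion and the large-$z$ tail \eqref{mittag-leffler-large-asymptotics} one gets the matching lower asymptotic, so that $E_\beta(-t^\beta \lambda_j) \approx (1+t^\beta \lambda_j)^{-1}$ uniformly in the sense of the $\approx$ notation introduced earlier in the paper. Squaring gives $(1+t^\beta \lambda_j)^{-2}$. Finally, \eqref{eigenvalue-asymptotics} supplies $\lambda_j \sim c\, j^{2/n}$, so $1+t^\beta \lambda_j \approx 1+t^\beta j^{2/n}$ as $j\to\infty$, and combining with the hypothesis $C_j \sim j^{-\gamma}$ from \eqref{spec-T} produces the claimed asymptotic \eqref{spec-beta-asymptotics}.

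I do not expect any real obstacle here: the argument is essentially a substitution of the Mittag-Leffler factor for the exponential one used in the previous theorem, and both ingredients — the two-sided control of $E_\beta(-z^\beta)$ and the Weyl eigenvalue asymptotic — have been recorded earlier in the paper. If anything, the only point that deserves a word of explicit justification is that the $\approx$ in \eqref{Ebound} (upper bound) can be complemented by a matching lower bound of the same order $(1+z^\beta)^{-1}$; this follows from the two regime expansions of $E_\beta$ around $0$ and at $+\infty$ recorded in the preliminaries, so that the symbol $\approx$ is warranted uniformly in the large-$j$ limit for fixed $t>0$.
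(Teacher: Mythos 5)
Your proposal is correct and follows essentially the same route as the paper: project $T^\beta_t$ onto $\phi_j$ using orthonormality to obtain $c_j^\beta(t)=c_j E_\beta(-t^\beta\lambda_j)$, then combine \eqref{spec-T}, \eqref{Ebound} and \eqref{eigenvalue-asymptotics} for the asymptotic. Your added remark about needing a matching lower bound of order $(1+z^\beta)^{-1}$ to justify the $\approx$ is a useful clarification of a point the paper leaves implicit, but it does not change the argument.
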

\begin{proof}
We have that
\begin{align*}
\int_{\mathcal{M}} T^\beta_t(m) \phi_j(m)\mu(dm) = & \sum_{i \in \mathbf{N}} c_i E_{\beta}(-t^\beta \lambda_i) \int_\mathcal{M} \phi_i(m) \phi_j(m)\mu(dm) \\
= & c_j  E_{\beta}(-t^\beta \lambda_j)\\
= & c_j^\beta(t).
\end{align*}
From \eqref{eigenvalue-asymptotics}, \eqref{spec-T} and \eqref{Ebound} we get \eqref{spec-beta-asymptotics}.
\end{proof}

From the fact that
\begin{equation}
L^2(\mathcal{M}) = \bigoplus_{j=1}^\infty \mathcal{H}_j
\end{equation}
where $\mathcal{H}_j$, $j=1,2,\ldots$ are orthogonal eigenspaces, we have that $c_j^\Psi$ and $c^\beta_j$ represent the variances of $T^\Psi$ and $T^\beta$ respectively, which are explained  by their projections to $\mathcal{H}_j$.
\begin{rk}
\label{lastRk}
We recall some symbols of the subordinators introduced before:
\begin{itemize}
\item $\Psi(z) = z^\alpha$: stable subordinator, $\nu(dy) = dy\, \alpha y^{-\alpha -1}/ \Gamma(1-\alpha)$;
\item $\Psi(z) = bz + z^\alpha$: stable subordinator with drift, $\nu(\cdot)$ as above and $b>0$;
\item $\Psi(z) = \ln (1 + z )$: gamma subordinator, $\nu(dy) =dy\, y^{-1}e^{-y}$;
\item $\Psi(z) = \ln (1+ z^\alpha)$: geometric stable subordinator, $\nu(dy)=dy\, \alpha y^{-1}E_\alpha(-y)$ where $E_\alpha$ is the Mittag-Leffler function.
\end{itemize}
If we consider the sum of an $\alpha_1$-stable subordinator $X$ and an $\alpha_2$-geometric stable subordinator $Y$, then we get that
\begin{align*}
\mathbb{E} \exp\left( -\lambda_j (X_{qt} + Y_{pt})\right) = & \exp\left( - qt\Psi_X(\lambda_j) - pt \Psi_Y(\lambda_j) \right) \\
= & e^{-qt \lambda_j^{\alpha_1} } (1+\lambda_j^{\alpha_2})^{-pt}
\end{align*}
and therefore, for the spectrum of \eqref{T-alpha-lemma} we obtain that
\begin{equation}
\mathbb{E}[c^\Psi_j(t)] \approx j^{- \gamma - \frac{2 p t}{n}\alpha_2 } \exp( -q t j^{\frac{2}{n}\alpha_1 })
\end{equation}
for large $j$. Obviously, for $p=0$ or $q=0$, we have qualitatively different behaviour for the covariance structure of $c_j^\alpha$ and therefore of the corresponding field $T^\alpha$.
\end{rk}
\begin{rk}
We notice that $c^\Psi_j(t)$ and $c^\beta_j(t)$ approach to zero as $t \to \infty$. Furthermore, the random solutions \eqref{T-alpha-lemma} and \eqref{T-beta-lemma} converge to the random variables $T^\Psi_t (m) \stackrel{t \to \infty}{\longrightarrow} \phi_0(m) c_0$ and $T^\beta_t (m) \stackrel{t \to \infty}{\longrightarrow}  \phi_0(m)c_0$ under the assumption that $\lambda_0=0$ and therefore $\Psi(\lambda_0)=0$ and $E_\beta(-\lambda_0) = 1$. Thus the steady state solution in both cases turns out to be a random variable on $\mathcal{M}$ with law given by  $c_0$ (see also Theorem \ref{thm-toharmonic}).
\end{rk}

\end{document}